\DeclarePairedDelimiter\ceil{\lceil}{\rceil}
\DeclarePairedDelimiter\floor{\lfloor}{\rfloor}
\newcommand{\parencite}{\cite}
\newcommand\norm[1]{\left\lVert#1\right\rVert}
\begin{document}
\title*{Convergence of the micro-macro Parareal Method for a Linear Scale-Separated Ornstein-Uhlenbeck SDE: extended version}
\titlerunning{Convergence of micro-macro Parareal for a linear multiscale OU SDE}

\author{Ignace Bossuyt and 
Stefan Vandewalle and
Giovanni Samaey}
\institute{Ignace Bossuyt \at Department  of  Computer  Science,  KU  Leuven,  Celestijnenlaan 200A,  3001  Leuven,  Belgium, \, 
\email{ignace.bossuyt1@kuleuven.be}
\and Stefan Vandewalle \at Department  of  Computer  Science,  KU  Leuven,  Celestijnenlaan 200A,  3001  Leuven,  Belgium, \, \email{stefan.vandewalle@kuleuven.be}
\and Giovanni Samaey \at Department  of  Computer  Science,  KU  Leuven,  Celestijnenlaan 200A,  3001  Leuven,  Belgium, \, \email{giovanni.samaey@kuleuven.be}}
 
\maketitle 
 
\abstract*{Time-parallel methods can reduce the wall clock time required for the accurate numerical solution of differential equations by parallelizing across the time-dimension. In this paper, we present and test the convergence behavior of a multiscale, micro-macro version of a Parareal method for stochastic differential equations (SDEs). In our method, the fine propagator of the SDE is based on a high-dimensional slow-fast microscopic model; the coarse propagator is based on a model-reduced version of the latter, that captures the low-dimensional, effective dynamics at the slow time scales.
We investigate how the model error of the approximate model influences the convergence of the micro-macro Parareal algorithm and we support our analysis with numerical experiments.}

\textbf{Abstract}
Time-parallel methods can reduce the wall clock time required for the accurate numerical solution of differential equations by parallelizing across the time-dimension. In this paper, we present and test the convergence behavior of a multiscale, micro-macro version of a Parareal method for stochastic differential equations (SDEs). In our method, the fine propagator of the SDE is based on a high-dimensional slow-fast microscopic model; the coarse propagator is based on a model-reduced version of the latter, that captures the low-dimensional, effective dynamics at the slow time scales.
We investigate how the model error of the approximate model influences the convergence of the micro-macro Parareal algorithm and we support our analysis with numerical experiments.
This is an extended and corrected version of [Domain Decomposition Methods in Science and Engineering XXVII. DD 2022, vol 149 (2024), pp. 69-76, Bossuyt, I., Vandewalle, S., Samaey, G.].

\section{Introduction and motivation}
This paper is an extended and corrected version of \parencite{bossuyt_convergence_2024}. 
Our aim is to obtain insight in the convergence of a parallel-in-time (PinT) method applied to a two-dimensional linear stochastic differential equation (SDE).  
In our method, the fine propagator of the SDE is based on a high-dimensional slow-fast microscopic model; the coarse propagator is based on a model-reduced version of the latter, that captures the low-dimensional, effective dynamics at the slow time scales.
More specifically, we are interested in the behavior of the mean and of the variance of the SDEs, as a function of the micro-macro Parareal iterations. This allows for an analytic treatment. 
We expect this convergence analysis to be useful as a stepping stone for analyzing PinT methods for higher-dimensional (nonlinear) SDEs.

In section \ref{section_micro_macro_Parareal}, we discuss the micro-macro Parareal algorithm for linear multiscale ODES and we present some extensions to its existing convergence theory. 
In section \ref{section_model_problem} we recapitulate the model problem SDE from \cite{bossuyt_convergence_2024}. 
In section \ref{section_micro_macro_Parareal_for_OU} we present our corrected convergence bounds for the moments of the bivariate multiscale Ornstein-Uhlenbeck SDE. 
In section \ref{section_numerical_experiments} we numerically verify the assumptions made in the convergence analysis and we present the same numerical experiments from \cite{bossuyt_convergence_2024}.

\section{Convergence of the micro-macro Parareal Algorithm for a linear multiscale ODE}
\label{section_micro_macro_Parareal}
In this section, we briefly recap the micro-macro Parareal algorithm, and formulate some lemmas that will be useful in later sections.
The Parareal algorithm \parencite{lions_resolution_2001} is 
an iterative method for the numerical approximation of initial-value problems.
Micro-macro Parareal is a generalisation of Parareal, specifically for the simulation of scale-separated ODEs \parencite{Legoll2013} and for scale-separated SDEs \parencite{legoll_parareal_2020}.

We consider initial-value problems for the evolution of a state variable $u \in \mathbb R^l$, with $l$ a natural number, of the form $\frac{du}{dt} = f(u,t)$ where $f$ is specific to the problem and where the time variable $t$ is contained in the interval $t \in [0,T]$.
We consider a discretisation in time on $\Delta t$-equispaced points $t_n = n \Delta t$, with $0 \leq n \leq N$ and such that $t_N = T$. 
We denote the approximations of the solution $u$ at time $t_n$ with $u_n$. 

The micro-macro Parareal algorithm combines two levels of description: (i) the micro variable $u$, which is high-dimensional, and (ii) the macro variable $X$, which is lower-dimensional.
We will denote the effect of a numerical approximation for the evolution of the micro variable with the fine propagator $\mathcal{F}_{\Delta t}$, advancing the solution between two consecutive time points. 
Similarly, we define the coarse propagator $\mathcal{C}_{\Delta t}$.

The micro and macro variables are related through coupling operators: 
the restriction operator $\mathcal{R}$ extracts macro information from a micro state, 
the lifting operator $\mathcal{L}$ produces a micro state that is consistent with a given macro state, 
and, thirdly, the matching operator $\mathcal{M}$ produces a micro state that is consistent with a given macro state, based on prior information of the micro state.

The micro-macro Parareal algorithm iterate at iteration $k$ and time step $n$ is given next.
For $k=0$ (initialization), we have, for $0 \leq n \leq N-1$,
\begin{equation}
\begin{aligned} 
X_0^0 = X_0 
\qquad 
X_{n+1}^{0} 
&= \mathcal{C}_{\Delta t} (X_n^{0}) 
\qquad 
u^0_0 = u_0
\qquad
u_{n+1}^{0} 
= \mathcal{L}(X_{n+1}^{0}),
\end{aligned}
\label{mM_Parareal_0}
\end{equation}
and for $k\geq 1$,
\begin{equation}
\begin{aligned} 
X^{k+1}_0 = X_0 \qquad 
X_{n+1}^{k+1} 
&= \mathcal{C}_{\Delta t} (X_n^{k+1}) 
+ \mathcal{R} ( \mathcal{F}_{\Delta t} (u_n^k) )
- \mathcal{C}_{\Delta t}( X_n^k) \\
u^{k+1}_0 = u_0 \qquad      
u_{n+1}^{k+1} &= \mathcal{M}(X_{n+1}^{k+1}, \mathcal{F}_{\Delta t} (u_n^k)).
\end{aligned}
\label{mM_Parareal_other_k}
\end{equation}
If the coupling operators are chosen such that $\mathcal{M}(\mathcal{R}u,u) = u$, then at each iteration it holds that $X_n^k = \mathcal{R}u_n^k$.
Classical Parareal \parencite{lions_resolution_2001} corresponds to the case $\mathcal{R} = \mathcal{L} = \mathcal{M} = \mathcal{I}$.

\subsection{Linear multiscale ODE system and its properties}
In \parencite{Legoll2013}, the convergence of micro-macro Parareal for a linear scale-separated ODE is studied. We briefly review the main ingredients of that theory, because we will use them further on to study the convergence for our model problem \eqref{slow_fast_OU_full}.

The test system in \parencite{Legoll2013} models the coupled evolution of a slow variable $x \in \mathbb{R}$ and a fast variable $y \in \mathbb{R}^d$, \textcolor{black}{$d\geq 1$}.
The evolution of $u= (x,y)$ for $t \in [0, T]$ is given by this linear multiscale ODE:
\begin{equation}
\left[ \begin{matrix} 
\dot x \\ \dot y 
\end{matrix} \right]
= \left[ 
\begin{matrix} 
a & p^T \\ q/\epsilon & -A/\epsilon \end{matrix} \right]
\left[ \begin{matrix} x \\ y \end{matrix} \right],
\qquad
\textcolor{black}{
\left[ \begin{matrix} x(0) \\ y(0) \end{matrix} \right]
=
\left[ \begin{matrix} x_0 \\ y_0 \end{matrix} \right]}
\label{test_system_Samaey_2013}
\end{equation}
where \textcolor{black}{$A \in \mathbb R^{p \times p}$} has positive eigenvalues \textcolor{black}{$\mu \geq \mu_- > 0$};
in words, the fast component $v$ is dissipative.
The reduced model for the approximate slow variable $X$ is:
\begin{equation}
\dot{X} =  \Lambda X,
\qquad
\textcolor{black}{X(0) = x_0}
\label{definition_lambda}
\end{equation} 
with $t \in [0,T]$ and $\Lambda = a + p^TA^{-1}q$.
We first copy a result from \parencite{Legoll2013}.

\begin{lemma}[Properties of the multiscale system \eqref{test_system_Samaey_2013} and its reduced model \eqref{definition_lambda}] 
For the initial-value problem \eqref{test_system_Samaey_2013} and its reduced model \eqref{definition_lambda}, there exists $\epsilon_0 \in (0,1)$, and a constant $C > 0$, independent of $\epsilon$, such that, for all $\epsilon < \epsilon_0$,

\begin{equation}
\sup_{t \in [0,T]} |x(t) - X(t)| \leq C \epsilon (|x_0| +  \| y_0 - A^{-1}q x_0 \| ),
\label{my_2013_eq_2_8}
\end{equation}
\begin{equation}
\sup_{t \in [0,T]} |x(t)| \leq C (|x_0| + \epsilon \| y_0 \|),
\label{my_2013_eq_2_13}
\end{equation}
\begin{equation}
\sup_{t \in [t^{\mathrm{BL}}_{\epsilon},T]} 
\|y(t)\| \leq C (|x_0| + \epsilon \| y_0 \|),
\label{my_2013_eq_2_14}
\end{equation}
where $t^{\mathrm{BL}}_{\epsilon}$ is the length of a boundary layer in time of the order of $\epsilon$:
\begin{equation}
t^{\mathrm{BL}}_{\epsilon} = \frac{2 \epsilon}{\mu_-} \ln\left( \frac{1}{\epsilon} \right).
\end{equation}
\label{lemma_properties_multiscale_system_2013}
\end{lemma}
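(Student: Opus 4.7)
The plan is to exploit the slow/fast structure of \eqref{test_system_Samaey_2013} by introducing the deviation from the slow manifold, namely $z(t) = y(t) - A^{-1}q\,x(t)$, which measures how far the fast variable is from its quasi-steady value. A direct computation using \eqref{test_system_Samaey_2013} rewrites the system in $(x,z)$ coordinates as
\begin{equation*}
\dot x = \Lambda x + p^T z,
\qquad
\dot z = -\tfrac{1}{\epsilon}A\,z - A^{-1}q\,p^T z - A^{-1}q\,\Lambda x,
\qquad
z(0) = y_0 - A^{-1}q\,x_0,
\end{equation*}
with $\Lambda = a + p^T A^{-1}q$ as in \eqref{definition_lambda}. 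In these coordinates, the reduced model is nothing more than setting $z\equiv 0$, so comparing $x$ with $X$ reduces to controlling the forcing $p^T z$ in the $x$-equation.

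For \eqref{my_2013_eq_2_8}, I would apply Duhamel's formula to the $(x-X)$-equation, yielding $x(t)-X(t)=\int_0^t e^{\Lambda(t-s)}p^T z(s)\,ds$, so that the task reduces to bounding $\int_0^T \|z(s)\|\,ds$. Variation of parameters on the $z$-equation gives
\begin{equation*}
z(t) = e^{-At/\epsilon} z(0) + \int_0^t e^{-A(t-s)/\epsilon}\bigl(-A^{-1}q\,p^T z(s) - A^{-1}q\,\Lambda x(s)\bigr)\,ds;
\end{equation*}
using the spectral bound $\|e^{-At/\epsilon}\|\leq C e^{-\mu_- t/\epsilon}$, the first term integrates to $O(\epsilon)\|z(0)\|$ and the convolution term contributes $O(\epsilon)$ (the $1/\epsilon$ in the exponent pays for an $\epsilon$ after integration), producing a Gronwall inequality of the form $\|z(t)\|\leq C e^{-\mu_- t/\epsilon}\|z(0)\| + C\epsilon\,\sup_{s\leq t}|x(s)| + C\epsilon\int_0^t\|z(s)\|ds$. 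A standard Gronwall argument then closes the bound $\int_0^T\|z(s)\|ds \leq C\epsilon(\|z(0)\|+|x_0|)$, which, fed back into the Duhamel formula for $x-X$, yields \eqref{my_2013_eq_2_8}.

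Inequality \eqref{my_2013_eq_2_13} would follow by the triangle inequality, writing $|x(t)|\leq |X(t)|+|x(t)-X(t)|$, combining the trivial bound $|X(t)|\leq e^{|\Lambda| T}|x_0|$ on the reduced model with \eqref{my_2013_eq_2_8}, and absorbing $\|A^{-1}q x_0\|$ into $|x_0|$ via the triangle inequality, so that $\|y_0 - A^{-1}q x_0\|$ upgrades to $\|y_0\|$ at the cost of enlarging $C$. For \eqref{my_2013_eq_2_14}, I would use $y=A^{-1}q\,x + z$, bound $|x(t)|$ by \eqref{my_2013_eq_2_13}, and observe that the choice $t^{\mathrm{BL}}_\epsilon = \frac{2\epsilon}{\mu_-}\ln(1/\epsilon)$ is exactly tuned so that $e^{-\mu_- t/\epsilon}\leq \epsilon^2$ for $t\geq t^{\mathrm{BL}}_\epsilon$, which reduces the contribution of the initial layer $\|z(0)\|$ to $O(\epsilon^2)\|z(0)\|$, well within the required $O(\epsilon)\|y_0\|$ tolerance.

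The main obstacle I anticipate is bookkeeping the $\epsilon$-dependence through the coupled Gronwall step: the cross term $-A^{-1}q\,p^T z$ in the $z$-equation is $O(1)$ (not $O(1/\epsilon)$), but when combined with the forcing by $x$ it can in principle produce an $\epsilon$-independent growth unless one carefully exploits that the fast semigroup $e^{-At/\epsilon}$ contributes an $\epsilon$ upon integration. Making this quantitative — and choosing $\epsilon_0$ small enough that the $O(\epsilon)$ feedback term in the Gronwall inequality can be absorbed into the left-hand side — is the delicate point; everything else is bookkeeping via triangle inequalities and the explicit identity $z(0)=y_0-A^{-1}q x_0$.
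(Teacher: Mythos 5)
The paper offers no proof of its own here---it defers entirely to \parencite[Lemma 2 and Corollary 3]{Legoll2013}---and your sketch reconstructs essentially the argument of that reference: the change of variables $z = y - A^{-1}q\,x$, Duhamel's formula on both the slow and fast equations, and a Gronwall/absorption step, so the approach is correct and matches the source. Two small inaccuracies are worth fixing: the convolution term naturally yields $C\epsilon\,\sup_{s\le t}\|z(s)\|$ rather than $C\epsilon\int_0^t\|z(s)\|\,ds$ (the kernel $e^{-\mu_-(t-s)/\epsilon}$ is not pointwise $O(\epsilon)$ near $s=t$, so the sup-form plus absorption for small $\epsilon_0$ is the correct way to close the estimate), and with the usual bound for non-normal matrices, $\|e^{-At/\epsilon}\|\le Ce^{-\mu_- t/(2\epsilon)}$, the boundary layer $t^{\mathrm{BL}}_{\epsilon}=\tfrac{2\epsilon}{\mu_-}\ln(1/\epsilon)$ buys only a factor $\epsilon$ rather than the $\epsilon^2$ you claim---which is still sufficient for \eqref{my_2013_eq_2_14}.
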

\begin{proof}
The bounds are proved in \parencite[proof of lemma 2 and corollary 3]{Legoll2013}.
\end{proof}

\subsection{Convergence of micro-macro Parareal for a linear multiscale test ODE}
This section is heavily based on \parencite{Legoll2013}.
We consider the following coupling operators, relating the micro variable $u = (x, \, y)$, obeying the multiscale evolution law in equation \eqref{test_system_Samaey_2013} and the macro variable $X$ obeying equation \eqref{definition_lambda}:
\begin{equation}
\begin{aligned}
\mathcal R \left( \left( \begin{matrix} x,y \end{matrix} \right) \right) &= x \\
\mathcal{M}(X,u) &= 
\left( \begin{matrix}
X, & \mathcal{R}^{\perp} u \end{matrix}  \right) \\
\mathcal{L}(X) &= 
\left( \begin{matrix} X, & A^{-1}q  X  \end{matrix} \right) \\
\end{aligned}
\label{definition_classical_coupling_operators}
\end{equation}
where $\mathcal R^{\perp} \left( u \right)
= 
\mathcal R^{\perp} \left( \left( \begin{matrix} x,y \end{matrix} \right) \right) 
= y$. 
In words, the restriction operator $\mathcal R$ discards the fast variable. 
The matching operator $\mathcal M$ modifies the micro variable such that its slow component equals a desired macro variable.
The lifting operator $\mathcal L$ puts the fast component of the micro variable to its equilibrium, conditional on the slow variable.

\begin{property}[Continuity property of the matching operator]
The matching operator satisfies
\begin{equation}
\norm{ \mathcal M \left( X, \left[ \begin{matrix}
x \\ y
\end{matrix} \right] \right) }
\leq 
\|X\| + \|y\|
\end{equation}
\label{continuity_property_matching}
\end{property}
\begin{proof}
The result follows from 
the definition of the matching operator $\mathcal M$ in equation \eqref{definition_classical_coupling_operators}, and the triangle inequality.
\end{proof}

Using the properties \textcolor{black}{in equations} \eqref{my_2013_eq_2_8} - \eqref{my_2013_eq_2_14} in \parencite{Legoll2013}, we will analyse the convergence of micro-macro Parareal for the linear test problem \eqref{test_system_Samaey_2013} with coarse model \eqref{definition_lambda}.
We first copy a theorem from \parencite{Legoll2013} to give some context.

\begin{lemma}[Convergence of micro-macro Parareal for homogeneous linear \textcolor{black}{multiscale} ODEs \eqref{test_system_Samaey_2013}-\eqref{definition_lambda}]
\label{lemma_micro_macro_convergence}
We use micro-macro Parareal, defined in \eqref{mM_Parareal_0}-\eqref {mM_Parareal_other_k}, 
with the coarse and fine propagators $\mathcal C_{\Delta t}$ and $\mathcal F_{\Delta t}$ as the exact solutions of equations \eqref{test_system_Samaey_2013}-\eqref{definition_lambda}, 
and with the coupling operators restriction from equation \eqref{definition_classical_coupling_operators}.
Let the exact solution $u_n$ satisfy $u_{n+1} = \mathcal F u_n$, and let $E_{n}^k = U_n^k - \mathcal{R} u_n$ be the macro error and $e_n^k = u_n^k - u_n$ be the micro error. 
\textcolor{black}{
Then, there exists $\epsilon_0 \in (0,1)$, that only depends on $\alpha$, $p$, $q$, $A$ and $T$, such that, for all $\epsilon < \epsilon_0$ and all $\Delta t > t_\epsilon^{BL}$, there exists a constant $C_k > 0$, independent of $\epsilon$, such that for all $k \geq 0$:
}
\begin{equation}
\sup_{0 \leq n \leq N} |E_n^k| \leq C_k \epsilon^{1+\ceil{k/2}}
\end{equation}
\begin{equation}
\sup_{0 \leq n \leq N} \|e_n^k \| \leq C_k \epsilon^{1+\floor{k/2}}
\end{equation}
\end{lemma}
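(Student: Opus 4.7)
I would proceed by induction on $k$, tracking the macro error $E_n^k = X_n^k - x_n$ and the fast component $\mathcal R^\perp e_n^k$ of the micro error separately. The slow component of $e_n^k$ coincides with $E_n^k$ because the choice of coupling operators \eqref{definition_classical_coupling_operators} satisfies $\mathcal{M}(\mathcal R u, u) = u$, which, as noted after \eqref{mM_Parareal_other_k}, forces $X_n^k = \mathcal R u_n^k$ at every iteration.

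For the base case $k=0$, the macro iterate $X_n^0$ is just the exact flow of the reduced model \eqref{definition_lambda}, so $|E_n^0| \leq C\epsilon$ is precisely \eqref{my_2013_eq_2_8}. The micro iterate is $u_n^0 = \mathcal L(X_n^0) = (X_n^0, A^{-1}qX_n^0)$, so the fast part of $e_n^0$ equals $A^{-1}q E_n^0 + (A^{-1}q x_n - y_n)$; the first term is $O(\epsilon)$, while the second is $O(\epsilon)$ at times $t_n \geq \Delta t > t_\epsilon^{BL}$, which follows from the invariant-manifold structure of \eqref{test_system_Samaey_2013} exploited in \eqref{my_2013_eq_2_14} (the slow manifold $y = A^{-1}qx$ is attractive with boundary layer $t_\epsilon^{BL}$).

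For the inductive step, I would exploit linearity of $\mathcal{C}_{\Delta t}$, $\mathcal{F}_{\Delta t}$, $\mathcal R$ and $\mathcal R^\perp$ to rewrite the macro update as
\begin{equation}
E_{n+1}^{k+1} \;=\; e^{\Lambda \Delta t} E_n^{k+1} \;+\; \bigl( \mathcal R \mathcal F_{\Delta t}(e_n^k) - e^{\Lambda \Delta t} E_n^k \bigr).
\end{equation}
The parenthesised defect is the difference between the exact slow flow and the slow component of the exact fine flow, both applied to the initial state $e_n^k$; by \eqref{my_2013_eq_2_8} it is bounded by $C\epsilon(|E_n^k| + \|\mathcal R^\perp e_n^k - A^{-1}q E_n^k\|) \leq C\epsilon \|e_n^k\|$. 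A discrete Gronwall argument across $n$ then gives $\sup_n |E_n^{k+1}| \leq C \epsilon \sup_n \|e_n^k\|$. For the micro update, the matching operator yields
\begin{equation}
e_n^{k+1} \;=\; \bigl( E_n^{k+1},\; \mathcal R^\perp \mathcal F_{\Delta t}(e_{n-1}^k) \bigr),
\end{equation}
and, since $\Delta t > t_\epsilon^{BL}$, \eqref{my_2013_eq_2_14} gives $\|\mathcal R^\perp \mathcal F_{\Delta t}(e_{n-1}^k)\| \leq C(|E_{n-1}^k| + \epsilon\|\mathcal R^\perp e_{n-1}^k\|)$.

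Plugging in the inductive hypothesis $|E_n^k| \leq C_k \epsilon^{1+\lceil k/2 \rceil}$ and $\|e_n^k\| \leq C_k \epsilon^{1+\lfloor k/2 \rfloor}$, the macro recursion yields an exponent $2 + \lfloor k/2 \rfloor$, which one checks equals $1 + \lceil (k+1)/2 \rceil$ for both parities of $k$; the micro recursion combines an exponent $1 + \lceil (k+1)/2 \rceil$ from the slow part with an exponent $\min(1+\lceil k/2 \rceil, 2+\lfloor k/2 \rfloor) = 1 + \lfloor (k+1)/2 \rfloor$ from the fast part, closing the induction. The main obstacle, and the source of the ceil-vs-floor asymmetry, is exactly this interplay: the matching operator upgrades the slow component using the newly computed macro iterate, gaining one full power of $\epsilon$ per iteration in the macro error, while the fast component of $e_n^{k+1}$ inherits only the macro-sized defect $|E_{n-1}^k|$ carried from the previous time index, so micro accuracy lags macro accuracy by one half-step on average. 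Extracting the correct constants requires carefully bounding $e^{\Lambda \Delta t}$ and $\mathcal F_{\Delta t}$ uniformly on $[0,T]$ in terms of the fixed data $a$, $p$, $q$, $A$, and $T$, which is where the $\epsilon$-independent constant $C_k$ comes from.
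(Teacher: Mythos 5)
Your proof is correct and follows essentially the same route as the paper, which simply defers to the inductive argument of \parencite{Legoll2013}: your macro and micro recursions are exactly the relations the paper later recalls as \eqref{SISC_2013_4_25} and \eqref{SISC_2013_4_23}, and your exponent bookkeeping closes the induction for both parities of $k$. The only imprecision is in the base case, where the estimate you need on $\|y(t_n)-A^{-1}q\,x(t_n)\|$ for $t_n>t^{\mathrm{BL}}_{\epsilon}$ is not literally \eqref{my_2013_eq_2_14} (which bounds $\|y\|$ itself) but the sharper slow-manifold estimate established alongside it in the cited reference.
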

\begin{proof}
The proof is the same as \parencite[Proof of theorem 13]{Legoll2013}. 
\end{proof}

\subsection{Convergence of micro-macro Parareal for a linear multiscale test ODE with another lifting operator}
We now present a slightly modified version of lemma \ref{lemma_micro_macro_convergence}. The only difference is a different choice of lifting operator, namely $\mathcal{L}(X) = \left[ \begin{matrix} X, y_0 \end{matrix} \right]$ instead of $\mathcal{L}(X) = \left[ \begin{matrix} X, & A^{-1}q  X  \end{matrix} \right]^T$.
The latter choice initialises the fast variable to its equilibrium, conditional on the slow variable.
The alternative lifting operator, which initialises the fast variable to its value at $t=0$, is useful, for instance, if the conditional equilibrium value is not known, or too expensive to compute. 
In our numerical experiments, we choose this alternative lifting operator, since for general (nonlinear) SDEs it may be impossible to cheaply obtain information about their steady-state distribution..

\begin{lemma}[Convergence of micro-macro Parareal with \textcolor{black}{lifting based on the initial condition} in the zeroth iteration for homogeneous linear \textcolor{black}{multiscale} ODEs]
We use micro-macro Parareal, defined in \eqref{mM_Parareal_0}-\eqref {mM_Parareal_other_k}, 
with the coarse and fine propagators
$\mathcal C_{\Delta t}$ and $\mathcal F_{\Delta t}$ the exact solution of the ODEs
\eqref{test_system_Samaey_2013}-\eqref{definition_lambda},  
and with the coupling operators $\mathcal R$ and $\mathcal M$ from equation \eqref{definition_classical_coupling_operators}. 
For lifting, we use $\mathcal{L}(X) = \left[ \begin{matrix} X, y_0 \end{matrix} \right]$.
Let \textcolor{black}{$E_{n}^k = U_n^k - \mathcal{R} u_n$} be the macro error and $e_n^k = u_n^k - u_n$ be the micro error. 
Then, there exists $\epsilon_0 \in (0,1)$, that only depends on $\textcolor{black}{a}$, $p$, $q$, $A$ and $T$, and a constant $C_k$, independent of $\epsilon$, such that, for all $\epsilon < \epsilon_0$, all $\Delta t > t_\epsilon^{\mathrm{BL}}$ and all $k \geq 0$:
\textcolor{black}{
\begin{equation}
\sup_{0 \leq n \leq N} |E_n^k| \leq C_k \epsilon^{\ceil{(k+1)/2}},
\label{equation_convergence_no_lifting_macro}
\end{equation}
\begin{equation}
\sup_{0 \leq n \leq N} \|e_n^k \| \leq C_k \epsilon^{\floor{(k+1)/2}}.
\label{equation_convergence_no_lifting_micro}
\end{equation}
}
\label{lemma_no_lifting}
\end{lemma}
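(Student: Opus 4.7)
The plan is to mimic the induction of \cite[Theorem 13]{Legoll2013} on the iteration index $k$, paying special attention to how the non-equilibrium lifting $\mathcal{L}(X) = [X, y_0]$ shifts the orders of $\epsilon$ compared with lemma \ref{lemma_micro_macro_convergence}. I would first establish the base case directly from \eqref{my_2013_eq_2_8} and \eqref{my_2013_eq_2_14}, then extract a coupled two-component recursion for $(|E_n^{k+1}|, \|e_n^{k+1}\|)$ from the Parareal update, and close the induction by summing with a discrete Gronwall inequality.

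For the base case $k=0$, the coarse propagator is exact for the reduced model, so $X_n^0 = X(t_n)$ and \eqref{my_2013_eq_2_8} yields $|E_n^0| \leq C\epsilon$, which matches the target exponent $\ceil{1/2}=1$. The micro iterate has $y$-component identically equal to $y_0$ (from the lifting) for $n\geq 1$; since $\|y(t_n)\|$ is itself only $O(1)$ outside the boundary layer by \eqref{my_2013_eq_2_14}, we obtain $\|e_n^0\|=O(1)=O(\epsilon^{\floor{1/2}})$ uniformly in $n$.

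For the induction step, using the linearity of $\mathcal{C}_{\Delta t}=e^{\Lambda \Delta t}$ and the fact that the fine propagator is exact (so $\mathcal{F}_{\Delta t}(u_n)=u_{n+1}$), the Parareal update \eqref{mM_Parareal_other_k} rewrites as
\begin{equation*}
E_{n+1}^{k+1} = e^{\Lambda \Delta t}\, E_n^{k+1} + \bigl(\mathcal{R}(\mathcal{F}_{\Delta t}(e_n^k)) - e^{\Lambda \Delta t} E_n^k\bigr),
\qquad
(e_{n+1}^{k+1})_y = \bigl(\mathcal{F}_{\Delta t}(e_n^k)\bigr)_y.
\end{equation*}
Applying \eqref{my_2013_eq_2_8} to the trajectory of \eqref{test_system_Samaey_2013} started from the error state $e_n^k$ bounds the bracketed term by $C\epsilon\bigl(|E_n^k| + \|(e_n^k)_y\|\bigr)$, while \eqref{my_2013_eq_2_14} (applicable because $\Delta t > t_\epsilon^{\mathrm{BL}}$) bounds the $y$-update by $C\bigl(|E_n^k| + \epsilon\,\|(e_n^k)_y\|\bigr)$. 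Substituting the induction hypothesis and checking the two parities of $k$ shows that both source terms are $O(\epsilon^{\ceil{(k+2)/2}})$ for the macro recursion and $O(\epsilon^{\floor{(k+2)/2}})$ for the micro recursion. A standard Gronwall-type summation in $n$ over the bounded horizon $[0,T]$, using $E_0^{k+1}=0$ and $e_0^{k+1}=0$, produces \eqref{equation_convergence_no_lifting_macro}--\eqref{equation_convergence_no_lifting_micro}.

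The main obstacle is verifying that the $O(1)$ micro error produced by the non-equilibrium lifting at $k=0$ is absorbed after the first iteration, so that the tidy alternating ceiling/floor pattern emerges from iteration $k=1$ onwards. This relies crucially on the assumption $\Delta t > t_\epsilon^{\mathrm{BL}}$: estimate \eqref{my_2013_eq_2_14} contracts the $y$-component of any perturbation by a factor $\epsilon$ per fine step, while \eqref{my_2013_eq_2_8} extracts an additional $\epsilon$ in the macro direction per Parareal sweep. Balancing these two gain mechanisms across successive iterations is what produces the exponents $\ceil{(k+1)/2}$ and $\floor{(k+1)/2}$, which are each shifted by one iteration relative to lemma \ref{lemma_micro_macro_convergence}.
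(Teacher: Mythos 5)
Your proposal is correct and follows essentially the same route as the paper: the base case $|E_n^0|\leq C\epsilon$, $\|e_n^0\|=O(1)$ from \eqref{my_2013_eq_2_8} and \eqref{my_2013_eq_2_14}, followed by the parity-based induction of \parencite[Theorem 13]{Legoll2013}. The two-component recursion you derive in the induction step is exactly the pair of estimates \eqref{SISC_2013_4_25}--\eqref{SISC_2013_4_23} that the paper quotes from \parencite{Legoll2013}, so the only difference is that you re-derive them in place rather than citing them.
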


\noindent
The main difference of this lemma with respect to lemma \ref{lemma_micro_macro_convergence} is that the order of convergence in $\epsilon$ is lower. 
The proof proceeds as follows: once the bound is proven in the zeroth iteration for this special lifting operator, the inductive proof technique from \parencite[proof of theorem 13]{Legoll2013} is applied.

\begin{proof}
The flow of the proof of this lemma closely follows \parencite[proof of theorem 13]{Legoll2013}. 
More specifically, in the zeroth iteration, using equation \eqref{my_2013_eq_2_8},
there exists a constant $C$, independent of $\epsilon$, such that the macro error can be bounded as follows: 
\begin{equation}
\begin{aligned}
\sup_{0 \leq n \leq N} |E^0_n|
&= \sup_{0 \leq n \leq N} |r(t_n) - U(t_n)| 
&\leq C \epsilon
\end{aligned}
\end{equation}
For the micro error in the zeroth iteration, there exists a constant $C$ such that, using property \ref{continuity_property_matching}
\begin{equation}
\begin{aligned}
\sup_{0 \leq n \leq N}\|e^0_n\| 
&=  \sup_{0 \leq n \leq N} \left[ \begin{matrix}
r(t_n) \\ v(t_n)
\end{matrix} \right] - 
\left[ \begin{matrix}
U(t_n)  \\ v_0
\end{matrix} \right] \\
&= \sup_{0 \leq n \leq N}  
\left[ \begin{matrix}
E^0_n  \\ v(t_n) - v_0
\end{matrix} \right] \\
&\leq 
\sup_{0 \leq n \leq N} \left( |E^0_n| + \| v(t_n) - v_0 \| \right) \\
&\leq 
\sup_{0 \leq n \leq N} \left( |E^0_n| + C \left( |r_0| +  \epsilon\| v_0 \| \right) + \| v_0 \| \right) 
\end{aligned} 
\end{equation}
where we used the triangle inequality, as well as equation \eqref{my_2013_eq_2_14}, to go from the third to the fourth line.
Thus there exist constants $C$ and $K$ such that
\begin{equation}
\sup_{0 \leq n \leq N}\|e^0_n\| 
\leq
K + C \epsilon
\end{equation}
Since $\epsilon < 1$, one can then find another constant $C$ such that$ 
\sup_{0 \leq n \leq N}\|e^0_n\|  
\leq 
C$.
The quantity $\sup_{0 \leq n \leq N} \|e^0_n\|$ is thus independent of $\epsilon$. 
Intuitively this makes sense. The parameter $\epsilon$ determines the rate of convergence to the steady-state but it does not influence how far the initial condition lies from the steady state.
Thus, at $k=0$, it holds that $\ceil{(k+1)/2} = 1$ and $\floor{(k+1)/2} = 0$, and therefore equation \eqref{equation_convergence_no_lifting_macro}
and \eqref{equation_convergence_no_lifting_micro} hold for $k=0$.

In the subsequent iterations $k \leq 1$, the inductive proof technique from \parencite[proof of theorem 13]{Legoll2013} can be applied.
This proof makes use of
the relations \parencite[(4.23) and (4.25)]{Legoll2013}, which are recalled here for convenience: there exists a constant $C$ such that
\begin{equation}
\| E^{k+1}_n \| \leq C \epsilon \sum_{p=1}^{n-1} A_{\mathcal{C}_{\Delta t}}^{n-p-1} \left( |E^k_{p}| + \| e^k_p \| \right) 
\label{SISC_2013_4_25}
\end{equation}
\begin{equation}
\| e^{k+1}_n \| \leq C \left( |E^k_{n-1}| + \epsilon \|e^k_{n-1} \| + |E^{k+1}_n| \right) 
\label{SISC_2013_4_23}
\end{equation}
The derivation of these equations is based on a continuity property of the matching operator as well as a recursion for the macro state.
The rest of the proof by induction proceeds as follows: using the induction hypothesis for iterations $k-1$ and $k$, the bound is proved for iteration $k+1$, using equations \eqref{SISC_2013_4_23} and \eqref{SISC_2013_4_25}.
In the table below we summarize this procedure. 
\begin{table}[H]
\centering
\begin{tabular}{c||c|c||c|c|}
& \multicolumn{2}{c||}{$k$ even (put $m=k/2$)} & \multicolumn{2}{c}{$k$ odd (put $m=(k+1)/2$)} \\ \hline
upper bound for & $\sup_{0 \leq n \leq N} E^k_n$ & $\sup_{0 \leq n \leq N} e^k_n$ & $\sup_{0 \leq n \leq N} E^k_n$ & $\sup_{0 \leq n \leq N} e^k_n$ \\ \hline
iteration $k$ (by ind. hypothesis) 
	& $C_{k}\epsilon^{m+1}$ & $C_{k}\epsilon^{m}$ 
	& $C_{k}\epsilon^m$ & $C_{k}\epsilon^{m}$ \\ \hline
iteration $k+1$ 
	& $C_{k+1}\epsilon^{m+1}$ & $C_{k+1}\epsilon^{m+1}$ 
	& $C_{k+1}\epsilon^{m+1}$ & $C_{k+1}\epsilon^{m}$ \\ 
\end{tabular}
\end{table}
This ends the proof.
\end{proof}

\section{Generalization of convergence theory of Parareal for linear ODEs to affine ODEs}
Lemma \ref{lemma_micro_macro_convergence} and \ref{lemma_no_lifting} deal with micro-macro Parareal for linear multiscale ODEs. 
Later we will be interested in the convergence of micro-macro Parareal for affine multiscale ODEs (possibly containing a inhomogeniety).
In the next lemma we prove that, except for the zeroth iteration, the inhomogeneity does not influence the convergence of the error of micro-macro Parareal (and thus also for classical Parareal).

\begin{lemma}[Error propagation property of micro-macro Parareal for nonhomogenous linear ODEs with constant coefficients]
We use micro-macro Parareal, defined in \eqref{mM_Parareal_0}-\eqref {mM_Parareal_other_k}.
We use (numerical) fine and coarse propagators $\mathcal F$ and $\mathcal C$, advancing the solution over a time interval $\Delta t$, that satisfy the following properties: 
$\mathcal{F}_{\Delta t}(u,t) = A_{\mathcal{F}, {\Delta t}}u + B_{\mathcal{F},{\Delta t}}(t)$ and 
$\mathcal{C}_{\Delta t}(X,t) = A_{\mathcal{C},{\Delta t}} X + B_{\mathcal{C},{\Delta t}}(t)$, where $A_{\mathcal{F},{\Delta t}}$ and $A_{\mathcal{C},{\Delta t}}$ are constant matrices that depend on $\Delta t$.
$B_{\mathcal{F},{\Delta t}}(t)$ and $B_{\mathcal{C},{\Delta t}}(t)$ are functions that depend on $t$ and $\Delta t$.
The dependence on $\Delta t$ is later omitted for brevity in the notation. 
Then, for all $k\geq 1$, the following recursion relation is valid:
\begin{equation}
\begin{aligned}
U^{k+1}_n &= \mathcal R A_{\mathcal F} u^k_{n-1} + \mathcal R B_{\mathcal F}(t_n)
+ \sum_{p=1}^{n-1} A_{\mathcal C}^{n-p}
\left(
\mathcal R A_{\mathcal F} u^k_{p-1} + \mathcal R B_{\mathcal F}(t_p) - A_{\mathcal C} U^k_p
\right)
\end{aligned}
\label{correction_solution_Parareal}
\end{equation}
For the macro error on $U^{k+1}_n$, i.e., $E^{k+1}_n = U^{k+1}_n  - U_n$, we have
\begin{equation}
\begin{aligned}
E^{k+1}_n 
&= \mathcal R A_{\mathcal F} e^k_{n-1} 
+ \sum_{p=1}^{n-1} 
A_{\mathcal C}^{n-p}
\left(
\mathcal R A_{\mathcal F} e^k_{p-1} - A_{\mathcal C} E^k_p
\right) \\
&= \sum_{p=1}^{n-1} A_{\mathcal C}^{n-p-1} 
\left( \mathcal R A_{\mathcal F} e^{k}_{p} 
- \sum_{p=1}^{n-1} A_{\mathcal C}^{n-p} E^k_p \right)
\end{aligned}
\label{correction_error_Parareal}
\end{equation}
\label{convergence_Parareal_nonhomog_ODE}
\end{lemma}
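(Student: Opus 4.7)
The strategy is to substitute the affine structure of $\mathcal{F}_{\Delta t}$ and $\mathcal{C}_{\Delta t}$ into the Parareal update rule \eqref{mM_Parareal_other_k} and exploit the immediate cancellation of the coarse inhomogeneity $B_{\mathcal{C}}$. Writing the $n$-th correction as
\[ U^{k+1}_n = \mathcal{C}_{\Delta t}(U^{k+1}_{n-1}) + \mathcal{R}\,\mathcal{F}_{\Delta t}(u^k_{n-1}) - \mathcal{C}_{\Delta t}(U^k_{n-1}), \]
I plug in the affine forms and observe that $B_{\mathcal{C}}(t_n)$ appears in both coarse evaluations with opposite signs and drops out. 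This yields the one-step linear recursion
\[ U^{k+1}_n - A_{\mathcal{C}} U^{k+1}_{n-1} = \mathcal{R} A_{\mathcal{F}} u^k_{n-1} + \mathcal{R} B_{\mathcal{F}}(t_n) - A_{\mathcal{C}} U^k_{n-1}, \]
in which the fine inhomogeneity $B_{\mathcal{F}}$ still appears on the right-hand side.

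Next I would unroll this recursion in $n$, using the fact that $U^{k+1}_0 = U_0 = U^k_0$ for every $k$. This produces a discrete convolution in the iterates, of the schematic form
\[ U^{k+1}_n = A_{\mathcal{C}}^n U_0 + \sum_{p=1}^{n} A_{\mathcal{C}}^{n-p}\bigl(\mathcal{R} A_{\mathcal{F}} u^k_{p-1} + \mathcal{R} B_{\mathcal{F}}(t_p) - A_{\mathcal{C}} U^k_{p-1}\bigr). \]
A reindexing of the $U^k$-contribution combined with isolation of the $p=n$ boundary term from the remaining sum lets the leading $A_{\mathcal{C}}^n U_0$ cancel against $A_{\mathcal{C}}^n U^k_0$, and rearranges what is left into the form stated in \eqref{correction_solution_Parareal}.

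For the error identity, I note that by hypothesis the exact micro solution satisfies $u_n = A_{\mathcal{F}} u_{n-1} + B_{\mathcal{F}}(t_n)$, so applying $\mathcal{R}$ shows that the exact macro trajectory $U_n = \mathcal{R} u_n$ obeys an affine recursion of exactly the same form as the one satisfied by $U^{k+1}_n$, only without the correction structure. Subtracting this trivial identity from \eqref{correction_solution_Parareal} cancels every occurrence of $\mathcal{R} B_{\mathcal{F}}(t_p)$, because it is independent of the iteration index $k$, and leaves only linear combinations of $e^k_p = u^k_p - u_p$ and $E^k_p = U^k_p - U_p$, which is \eqref{correction_error_Parareal}.

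The main obstacle will be purely bookkeeping: carefully matching summation limits and tracking the index shift between the $U^k_{p-1}$ that the one-step unrolling naturally produces and the index appearing in the stated identity, and verifying that all boundary contributions cancel after reindexing. There is no analytic work, no use of the multiscale parameter $\epsilon$, and no continuity estimate required here: the lemma is a structural algebraic identity whose only content is that the linearity of both propagators, together with the telescoping inherent in the Parareal correction, forces the inhomogeneity to drop from the error equation at every iteration $k\geq 1$.
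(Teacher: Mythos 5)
Your proposal is correct and follows essentially the same route as the paper's proof in the appendix: substitute the affine forms into \eqref{mM_Parareal_other_k}, observe that $B_{\mathcal C}$ cancels immediately between the two coarse evaluations, unroll the resulting one-step recursion in $n$, and then subtract the exact macro recursion $U_n = \mathcal R\left(A_{\mathcal F}u_{n-1}+B_{\mathcal F}(t_n)\right)$ so that the fine inhomogeneity drops out of the error identity. One remark on the bookkeeping you defer: carrying out the reindexing of your unrolled sum yields $-U^k_p$ inside the summation (matching the final line of the paper's appendix derivation) rather than the $-A_{\mathcal C}U^k_p$ printed in the displayed statement of \eqref{correction_solution_Parareal}, so your intermediate form is consistent with the proof rather than with that apparent typo in the statement.
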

The lemma is proven in appendix \ref{proof_of_affine_Parareal}.
\newline
In words, the recursion relation for the error of micro-macro Parareal for a linear ODE does not depend on the inhomogeneity for iterations $k \geq 1$.
Clearly, this result is also valid for classical Parareal (corresponding to $\mathcal R = \mathcal M = \mathcal L = I$).
In the zeroth, the macro approximation is given by 
\begin{equation}
\begin{aligned}
U^0_{n+1} 
= \mathcal C (U^0_n) 
&= A_{\mathcal{C}} U^0_n + B_{\mathcal{C}}(t_n) \\
&= A_{\mathcal{C}}^n U_0 +
\sum_{p=1}^n A_{\mathcal{C}}^{p} 
B_{\mathcal{C}}(t_{n-p})
\end{aligned}
\end{equation}
The exact micro solution is given by 
\begin{equation}
\begin{aligned}
u_{n+1} 
= \mathcal F (u^0_n) 
&= A_{\mathcal{F}} u_n + B_{\mathcal{F}}(t_n) \\
&= A_{\mathcal{F}}^n u_0 +
\sum_{p=1}^n A_{\mathcal{F}}^{p} 
B_{\mathcal{F}}(t_{n-p})
\end{aligned}
\end{equation}
The macro error in the zeroth iteration is thus given by
\begin{equation}
\begin{aligned}
E^0_n 
&=
U^0_n - Ru_n
=
U^0_n - U_n \\
&=  
A_{\mathcal{C}}^n 
U_0
- 
\mathcal R \left(
A_{\mathcal{F}}^n 
u_0 
\right)
+ 
\sum_{p=1}^n 
\left(
A_{\mathcal{C}}^{p} 
B_{\mathcal{C}}(t_{n-p})
- 
\mathcal R \left(
A_{\mathcal{F}}^{p} 
B_{\mathcal{F}}(t_{n-p}) 
\right)
\right)
\end{aligned}
\end{equation}
and it can be seen that, in the zeroth iteration, $E^0_n$ effectively depends on the inhomogeniety of the ODE.
The precise form for the micro error depends on the lifting operator.

\section{Stochastic differential equation model problem}
\label{section_model_problem}
In the remainder of this work, we consider a two-dimensional slow-fast Ornstein-Uhlenbeck (OU) stochastic differential equation (SDE) \parencite{uhlenbeck_theory_1930}.  
In this section we discuss this model problem and we derive a reduced model (section \ref{subsection_reduced_model}). 
We give ODEs that describe the first two moments of the SDE (subsection \ref{subsection_moments_OU}).
We then provide a detailed overview of the operators that will be used in micro-macro Parareal (subsection \ref{subsection_Parareal_for_OU}), and we list the required assumptions on parameters of the SDE such that our theoretical analysis can be applied (subsection \ref{section_overview_assumptions}).

We study a multiscale OU SDE that models the coupled evolution of a slowly evolving variable $x \in \mathbb{R}$ and a ‘fast' variable $y \in \mathbb{R}$ that quickly reaches its equilibrium distribution:

\begin{equation}
\begin{aligned}
\left[ \begin{matrix}
dx \\ dy
\end{matrix} \right] 
&= 
\left[ \begin{matrix}
\alpha & \beta \\
\gamma/\epsilon & \zeta/\epsilon
\end{matrix} \right] 
\left[ \begin{matrix}
x \\ y
\end{matrix} \right] dt
+ 
\sigma
\left[ \begin{matrix}
1 & 0 \\
0 & 1 /\sqrt{\epsilon}
\end{matrix} \right] 
dW.
\end{aligned}
\label{slow_fast_OU_full}
\end{equation} 
Here, $dW \in \mathbb{R}^2$ is a two-dimensional Brownian motion and $\epsilon \in \mathbb{R}$ is a (small) time scale separation parameter $\epsilon \ll 1$, and time $t \in [0,T]$. The initial condition has a distribution with mean $\left[ \begin{matrix}
m_{x,0} \\ m_{y,0} 
\end{matrix} \right]$ and covariance matrix $\left[ \begin{matrix}
\Sigma_{x,0} &  \Sigma_{xy,0}  \\  \Sigma_{xy,0} & \Sigma_{y,0} 
\end{matrix} \right]$.

Model problem \eqref{slow_fast_OU_full} mimics the general situation where $x$ is a low-dimensional quantity of interest whose evolution is influenced by a quickly evolving, high-dimensional variable $y$, described by SDEs. 
The joint probability density of $x$ and $y$ obeys an advection-diffusion partial differential equation, the Fokker-Planck equation (see, e.g. \parencite{gardiner_handbook_1983}). 
The direct solution of this partial differential equation using classical deterministic techniques, suffers from the curse of dimensionality. Instead,  one can obtain an approximation of the Fokker-Planck equation by using a Monte Carlo method on the corresponding SDE (see, e.g. \parencite{gardiner_handbook_1983}).
In this work, we are interested in the first moments (mean and (co)variance) of the SDE \eqref{slow_fast_OU_full}.

\subsection{Derivation of a reduced model for multiscale SDEs}
\label{subsection_reduced_model}
The averaging technique from \parencite[chapter 10, see, e.g., Remark 10.2]{pavliotis_multiscale_2008} allows to define a reduced dynamics for a scalar variable, that approximates the slow variable $x$ in \eqref{slow_fast_OU_full}. 
This averaging technique exploits time-scale separation to integrate out the fast variable with respect to $U^{\infty}(y|x)$, the invariant distribution of the fast variable $y$ conditioned on a fixed slow variable $x$.

The reduced model for the reduced variable $X \in \mathbb{R}$ reads as follows ($\lambda_{\Sigma}$ and $\Sigma_{\Sigma}$ are defined implicitly):
\begin{equation} 
d X = A(X) dt + S(X) dW \\[-\baselineskip]
\label{reduced_stochastic}
\end{equation} 
with 
\begin{equation*}
\begin{aligned}
A(X) &= \int_{\mathcal{Y}} a(X,y) U^{\infty}(y|X) dy
= \lambda_{\Sigma} X \coloneqq \left( \alpha - \frac{\beta \gamma}{\zeta} \right) X  \\
S(X) S(X)^T &= \int_{\mathcal{Y}} s(X,y) s(X,y)^T U^{\infty}(y|X) dy 
= \Sigma_{\Sigma}\textcolor{black}{^2} 
\coloneqq \sigma\textcolor{black}{^2},
\end{aligned}
\end{equation*}
where $\mathcal{Y}$ denotes the domain of $y$. It can be shown that for the OU system \eqref{slow_fast_OU_full}, the conditional distribution $U^{\infty}(y|x) = \mathcal{N}\left( \frac{\gamma x}{\zeta}, \frac{\sigma^2}{2 \zeta} \right)$ (see \parencite[Example 6.19]{pavliotis_multiscale_2008}). 

\textcolor{black}{
In summary, the reduced SDE approximating the dynamics of the slow variable $x$ of equation \eqref{slow_fast_OU_full} is
\begin{equation}
dX = \left(\alpha - \frac{\beta \gamma}{\zeta}  \right) X dt + \sigma dW.
\label{extra_clarity_SDE_slow}
\end{equation}
}

Although the reduced model \eqref{reduced_stochastic} is only an approximation to the slow dynamics, it offers two computational advantages w.r.t. the full, scale-separated system \eqref{slow_fast_OU_full}: (i) it contains fewer degrees of freedom, and (ii) it can be discretised with a time step that is independent of $\epsilon$. 
As $\epsilon$ approaches zero, the multiscale model \eqref{slow_fast_OU_full} becomes more stiff, while the (cheaper) reduced model becomes a more accurate approximation.

\subsection{Moments of the Ornstein-Uhlenbeck process and its reduced model} 
\label{subsection_moments_OU}
The evolution of the mean and the variance of a linear SDE
can be described exactly using the moment equation from \parencite{arnold_stochastic_1974}. 
Thus, for studying the moments of the linear Ornstein-Uhlenbeck SDE model problem, we can use linear ODEs instead of a Monte Carlo simulation.

\noindent
\textbf{\textit{Statistical moments of multiscale model.}}
The evolution of the mean of the multiscale SDE \eqref{slow_fast_OU_full} is described by the following linear ODE:
\begin{equation}
\frac{d}{dt}\left[ \begin{matrix}
m_x \\ m_y
\end{matrix} \right] 
= 
\left[ \begin{matrix}
\alpha & \beta \\
\gamma/\epsilon & \zeta/\epsilon
\end{matrix}  \right]
\left[ \begin{matrix}
m_x \\ m_y
\end{matrix} \right],
\qquad 
\textcolor{black}{
\left[ \begin{matrix}
m_x(0) \\ m_y(0)
\end{matrix} \right]
=
\left[ \begin{matrix}
m_{x,0} \\ m_{y,0}
\end{matrix} \right].
}
\label{OU_mean_full}
\end{equation}
\noindent
The evolution of the covariance of \eqref{slow_fast_OU_full} is given by the linear ODE $\dot \Sigma = B_{\Sigma} \Sigma + b_{\Sigma}$ with $\Sigma(0) = \Sigma_0$:
\begin{equation}
\frac{d}{dt}
\left[ \begin{matrix}
\Sigma_x \\ \Sigma_{xy} \\ \Sigma_y 
\end{matrix} \right] 
= 
\left[ 
\begin{array}{c | c c}
2\alpha &   2 \beta & 0 \\ \hline
\gamma/\epsilon & \alpha + \zeta/\epsilon & \beta \\
0 & 2\gamma/\epsilon & 2 \zeta/ \epsilon
\end{array}  \right]
\left[ \begin{matrix}
\Sigma_x \\ \Sigma_{xy} \\ \Sigma_y 
\end{matrix} \right]
+
\left[ \begin{matrix}
\sigma^2 \\ 0 \\ \sigma^2/\epsilon
\end{matrix} \right] ,
\qquad 
\textcolor{black}{
\left[ \begin{matrix}
\Sigma_x(0) \\ \Sigma_{xy}(0) \\ \Sigma_y(0) 
\end{matrix} \right] 
= 
\left[ \begin{matrix}
\Sigma_{x,0} \\ \Sigma_{xy,0} \\ \Sigma_{y,0} 
\end{matrix} \right],
}
\label{OU_variance_full}
\end{equation}
where we name 
$
B_{\Sigma}(\epsilon)
= \left[ 
\begin{array}{c | c}
2\alpha & p_{\Sigma}^T \\ \hline
q_{\Sigma}/\epsilon & \,-A_{\Sigma}(\epsilon)/\epsilon
\end{array}  \right],
$
with $A_\Sigma(\epsilon) = - 
\left[ 
\begin{matrix}
\alpha\textcolor{black}{\epsilon} + \zeta &\, \beta \textcolor{black}{\epsilon}	\\
2\gamma & 2\zeta 
\end{matrix}  
\right].$
To ensure stability of the fast dynamics, we assume that the parameters in \eqref{slow_fast_OU_full} are chosen such that the real part of the eigenvalues of the matrix $A_{\Sigma}(\epsilon)$ are all positive.
This condition is satisfied for instance for any $\alpha, \beta \in \mathbb{R}$ if $\zeta$ and $\gamma$ are sufficiently small.

\noindent
\textbf{\textit{Statistical moments of reduced model.}}
The evolution of the mean of $X$ in \eqref{extra_clarity_SDE_slow} is given by
\begin{equation}
\frac{dm_X}{dt} = \left( \alpha - \frac{\beta \gamma}{\zeta} \right) m_X,  
\qquad 
\textcolor{black}{
m_X(0) = m_{x,0}.
}
\label{OU_mean_reduced}
\end{equation}
The evolution of the variance $\Sigma_X$ of the reduced system \eqref{extra_clarity_SDE_slow} is given by
\begin{equation}
\frac{d\Sigma_X}{dt} = \lambda_{\Sigma} \Sigma_X  + \Sigma_{\Sigma}^2 = 2 \left( \alpha - \frac{\beta \gamma}{\zeta} \right)  \Sigma_X + \sigma^2,
\qquad
\textcolor{black}{
\Sigma_X(0) = \Sigma_{x,0}.
}
\label{OU_variance_reduced}
\end{equation} 

%
%
\noindent
We now briefly discuss a different perspective on the evolution ODE for the slow variance \eqref{OU_variance_reduced}.
We will use this insight later in the proof of theorem.
\begin{remark}[Alternative derivation of reduced model for evolution of slow variance]
The evolution of the slow variance \eqref{OU_variance_reduced} describes the second moment of the averaged SDE \eqref{extra_clarity_SDE_slow}.
There exists, however, a connection between 
(i) the ODE \eqref{OU_variance_reduced} based on the reduction technique for SDEs in equation \eqref{reduced_stochastic}, and 
(ii) a similar ODE derived through the reduction technique for multiscale ODEs in equation \eqref{definition_lambda} for linear multiscale ODEs.
This second reduction technique is used in the context of the micro-macro Parareal algorithm for linear multiscale ODEs in  \parencite{Legoll2013}.
This latter ODE is
\begin{equation}
\frac{dm_X}{dt} = \lambda_{\Sigma,\epsilon}(\epsilon),
\end{equation}
where 
\begin{equation}
\begin{aligned}
\lambda_{\Sigma,\epsilon}(\epsilon) 
= 2 \alpha + p_{\Sigma}^T A_{\Sigma}(\epsilon)^{-1} q_{\Sigma}
= 2 \alpha -
\frac{2 \beta \gamma \zeta}{(\epsilon \alpha + \zeta)\zeta - \gamma \beta \epsilon}.
\end{aligned}
\label{OU_lambda_reduced_model_exact}
\end{equation}
Now we can interpret the decay parameter $\lambda_{\Sigma}$ from the averaged model \eqref{OU_variance_reduced} as a limit case: 
\begin{equation}
\begin{aligned}
\lambda_{\Sigma}
= 2 \left( \alpha - \frac{\beta \gamma}{\zeta} \right)
= \lim_{\epsilon \rightarrow 0} \lambda_{\Sigma,\epsilon}(\epsilon) 
\end{aligned}
\end{equation}
In words, the variance of the slow SDE \eqref{extra_clarity_SDE_slow} does, in general, not obey the same evolution equations as when the reduction technique for ODEs in equation \eqref{definition_lambda} is applied on the moment ODE \eqref{OU_variance_full} of the multiscale SDE \eqref{slow_fast_OU_full}. These models, however, are closely related.
\label{remark_covariance_reduced_model}
\end{remark}

\subsection{Using micro-macro Parareal for the model problem}
\label{subsection_Parareal_for_OU}
In this work we are interested in the moments of the multiscale SDE solution paths (corresponding to weak convergence). 
We thus select the micro variable, describing the first two moments of its solution as 
$\textcolor{black}{u} = \left( \begin{matrix} m_x & m_y & \Sigma_x & \Sigma_{xy} & \Sigma_y \end{matrix} \right)$.
The macro variable is defined as 
$\textcolor{black}{U} = \left( \begin{matrix} m_x & \Sigma_x \end{matrix} \right)$.
The fine propagator $\mathcal{F}_{\Delta t}$ is the weak solution to the SDE \eqref{slow_fast_OU_full}, which we model via its statistical moment equations \eqref{OU_mean_full} and \eqref{OU_variance_full}. The coarse propagator $\mathcal{C}_{\Delta t}$ simulates the reduced system \eqref{reduced_stochastic}, or, equivalently, the scalar moment ODEs \eqref{OU_variance_full} and \eqref{OU_variance_reduced}.
As a quantity of interest, we are interested in the evolution of its first moments (i.e. mean and variance).

The coupling operators for the SDE problem, namely restriction $\mathcal{R}_{\mathrm{SDE}}$, matching $\mathcal{M}_{\mathrm{SDE}}$ and lifting $\mathcal{L}_{\mathrm{SDE}}$, are defined as 
\begin{equation}
\begin{aligned}
\mathcal{R}_\mathrm{SDE} \left(  
\left( \begin{matrix}
{m_x} &  m_y  & {\Sigma_x} & \Sigma_{xy} & \Sigma_y
\end{matrix} 
\right) \right) 
&= 
\left( \begin{matrix}
{m_x} & {\Sigma_x} \end{matrix} \right)
\\
\mathcal{M}\textcolor{black}{_\mathrm{SDE}} \left( 
\left( \begin{matrix}
{M_X} & {S_X}
\end{matrix} \right), 
\left[ \begin{matrix}
m_x & m_q & \Sigma_x & \Sigma_{xy} & \Sigma_y 
\end{matrix} \right)
 \right) 
&= 
 \left( \begin{matrix}
{M_X} & m_y & {S_X} & \Sigma_{xy} & \Sigma_y 
\end{matrix} \right),
\\
\mathcal{L}\textcolor{black}{_\mathrm{SDE}} 
\left( 
\left( \begin{matrix}
{M_X} & {S_X}
\end{matrix} 
\right) \right)  
&= 
\left( \begin{matrix}
{M_X} & m_{y,0} & {S_X} & \Sigma_{xy,0} & \Sigma_{y,0} 
\end{matrix} \right).
\end{aligned}
\label{coupling_operators_SDE_problem}
\end{equation}
We also define $\mathcal R_{\mathrm{SDE}}^{\perp}(X,y) = y$.
In words, the restriction operator $\mathcal{R}_\mathrm{SDE}$ extracts the slow mean and slow variance from the state variable that contains all micro means and variances.
The matching operator $\mathcal{M}_\mathrm{SDE}$ replaces the slow mean and the slow variance in the micro variable with desired macro variables.
The lifting operator  $\mathcal{L}_\mathrm{SDE}$ initializes the moments of the fast variable to their initial value. 

\subsection{Overview of assumptions on model parameters}
\label{section_overview_assumptions}
We require these assumptions on the parameters of the Ornstein-Uhlenbeck SDE:
\begin{itemize} 
\item We assume that there exists a value 
$\mu_{\Sigma,-} \leq $ such that the eigenvalues $\mu_{\Sigma}(\epsilon)$ of the matrix $A_{\Sigma}(\epsilon)$ are all positive and bounded: 
$0 < \mu_{\Sigma,-}
\leq 
\operatorname{Re}(\mu_{\Sigma}(\epsilon))$ .
Moreover  
$0 < \mu_{\Sigma,-} \leq \mu_{\Sigma,\epsilon,-}$ for all $\epsilon \in (0,1)$. 
Thus, as a consequence, $A_{\Sigma}(\epsilon)$ is invertible for all $\epsilon \in (0,1)$;


\item for all values $\epsilon \in (0,1)$, 
$\lambda_{\Sigma}$ satisfies $\lambda_{\Sigma,-} \leq \lambda_{\Sigma} \leq \lambda_{\Sigma,+}$ for some $\lambda_{\Sigma,-}$ and $\lambda_{\Sigma,+}$ independent of $\epsilon$. We also assume that $\lambda_{\Sigma} \neq 0$ and that $\lambda_{\Sigma, \epsilon} \neq 0$ for all $\epsilon \in (0,1)$;

\end{itemize}
If these conditions are satisfied, then the matrix $B_{\Sigma}$, defined in equation \eqref{OU_variance_full}, is invertible for all $\epsilon \in (0,1)$.
Indeed, in that case an explicit expression for the inverse of $B_{\Sigma}$ is given in lemma \ref{lemma_inverse_B_SIGMA}.

\begin{lemma}[Inverse of $B_{\Sigma}$]
\label{lemma_inverse_B_SIGMA}
If $\lambda_{\Sigma} \neq 0$ (defined in equation \eqref{OU_lambda_reduced_model_exact}), then the inverse of the matrix $B_{\Sigma}$ is given the next formula. 
For notational brevity, we omit the dependence of $A_{\Sigma}$ on $\epsilon$.
\begin{equation}
\begin{aligned}
B_{\Sigma}^{-1}
&= \left[ 
\begin{array}{c | c}
2\alpha & p_{\Sigma}^T \\ \hline
q_{\Sigma}/\epsilon & \,-A_{\Sigma}/\epsilon
\end{array}  \right]^{-1} \\
&= 
\lambda_{\Sigma,\epsilon}^{-1}
\left[ 
\begin{array}{c | c}
1 &  \epsilon p_{\Sigma}^T A_{\Sigma}^{-1} \\ 
\hline 
A_{\Sigma}^{-1} q_{\Sigma}
& 
-\epsilon A_{\Sigma}^{-1} \lambda_{\Sigma,\epsilon} + \epsilon A_{\Sigma}^{-1} q_{\Sigma} p_{\Sigma}^T A_{\Sigma}^{-1} 
\end{array}  \right]
\end{aligned}
\label{equation_Schur_complement}
\end{equation}
where $\lambda_{\Sigma,\epsilon} = 2\alpha + p_{\Sigma}^T A_{\Sigma}^{-1} q_{\Sigma}$ (see equation \eqref{OU_lambda_reduced_model_exact}).
\end{lemma}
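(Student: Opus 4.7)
The plan is to apply the standard block-matrix (Schur complement) inversion formula, using the lower-right block $D = -A_\Sigma/\epsilon$ as the pivot. Since $A_\Sigma(\epsilon)$ is invertible by the standing assumptions in section \ref{section_overview_assumptions}, we have $D^{-1} = -\epsilon A_\Sigma^{-1}$, so the formula
\begin{equation*}
\begin{bmatrix} A & B \\ C & D \end{bmatrix}^{-1}
=
\begin{bmatrix} S^{-1} & -S^{-1} B D^{-1} \\ -D^{-1} C S^{-1} & D^{-1} + D^{-1} C S^{-1} B D^{-1} \end{bmatrix},
\qquad S = A - B D^{-1} C,
\end{equation*}
is applicable as soon as we check that the Schur complement $S$ is invertible.

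First I would compute $S$ with $A = 2\alpha$, $B = p_\Sigma^T$, $C = q_\Sigma/\epsilon$ and $D = -A_\Sigma/\epsilon$. A direct substitution gives
\begin{equation*}
S = 2\alpha - p_\Sigma^T\bigl(-\epsilon A_\Sigma^{-1}\bigr)\bigl(q_\Sigma/\epsilon\bigr) = 2\alpha + p_\Sigma^T A_\Sigma^{-1} q_\Sigma = \lambda_{\Sigma,\epsilon},
\end{equation*}
which by hypothesis is nonzero, so $S^{-1} = \lambda_{\Sigma,\epsilon}^{-1}$ exists. Substituting the four blocks into the inversion formula then yields, block by block: the $(1,1)$-entry $\lambda_{\Sigma,\epsilon}^{-1}$; the $(1,2)$-block $-\lambda_{\Sigma,\epsilon}^{-1} p_\Sigma^T (-\epsilon A_\Sigma^{-1}) = \lambda_{\Sigma,\epsilon}^{-1}\,\epsilon\, p_\Sigma^T A_\Sigma^{-1}$; the $(2,1)$-block $-(-\epsilon A_\Sigma^{-1})(q_\Sigma/\epsilon)\lambda_{\Sigma,\epsilon}^{-1} = \lambda_{\Sigma,\epsilon}^{-1} A_\Sigma^{-1} q_\Sigma$; and finally the $(2,2)$-block
\begin{equation*}
-\epsilon A_\Sigma^{-1} + \epsilon A_\Sigma^{-1} q_\Sigma\,\lambda_{\Sigma,\epsilon}^{-1}\, p_\Sigma^T A_\Sigma^{-1}
= \lambda_{\Sigma,\epsilon}^{-1}\bigl(-\epsilon A_\Sigma^{-1}\lambda_{\Sigma,\epsilon} + \epsilon A_\Sigma^{-1} q_\Sigma p_\Sigma^T A_\Sigma^{-1}\bigr).
\end{equation*}
Collecting the common factor $\lambda_{\Sigma,\epsilon}^{-1}$ gives exactly the expression claimed in \eqref{equation_Schur_complement}.

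As an alternative (or as a sanity check) one could simply multiply $B_\Sigma$ by the claimed inverse and verify that the result is the identity; this amounts to the same four block computations but avoids citing the Schur formula. I do not anticipate any real obstacle here: the only nontrivial point is bookkeeping of the $\epsilon$-factors arising from $D = -A_\Sigma/\epsilon$ and $C = q_\Sigma/\epsilon$, and checking that the assumption $\lambda_{\Sigma,\epsilon} \neq 0$ from section \ref{section_overview_assumptions} is precisely the nondegeneracy of the Schur complement needed to make the inversion formula valid.
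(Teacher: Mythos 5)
Your proposal is correct and follows essentially the same route as the paper, which simply cites the block-matrix (Schur complement) inversion formula with the lower-right block $-A_\Sigma/\epsilon$ as pivot; your block-by-block substitution reproduces \eqref{equation_Schur_complement} exactly. You also correctly pinpoint that the operative nondegeneracy condition is $\lambda_{\Sigma,\epsilon}\neq 0$ (the Schur complement), which is slightly more precise than the $\lambda_\Sigma\neq 0$ stated in the lemma but is covered by the assumptions of section \ref{section_overview_assumptions}.
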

\begin{proof}
The result follows from substituting the submatrices of $B_{\Sigma}$ in the inversion formula for a block matrix, see, e.g., \parencite[Proposition 2.8.7]{bernstein_matrix_2005}. 
Equation \eqref{equation_Schur_complement} is only valid if $A_{\Sigma}$ is nonsingular.
This condition is satisfied since $A_{\Sigma}(\epsilon)$ is assumed to have strictly positive eigenvalues (see section \ref{section_overview_assumptions}).
\end{proof}


\section{Convergence of micro-macro Parareal for the multiscale Ornstein-Uhlenbeck SDE model problem}
\label{section_micro_macro_Parareal_for_OU}
\textbf{\textit{Convergence of the first moment $(m_x, \, m_y)$.}} 
The moment equations \eqref{OU_mean_full} and \eqref{OU_mean_reduced}, describing the evolution of the first moment of the multiscale SDE and its reduced model, obey the structure of the multiscale system \eqref{test_system_Samaey_2013} \textcolor{black}{and its reduced model \eqref{definition_lambda}}. 
Therefore, we can directly apply lemma \ref{lemma_no_lifting}.

\noindent
\textbf{\textit{Convergence of the covariance $(\Sigma_x, \, \Sigma_{xy}, \, \Sigma_y)$.}} 
For the covariance it is not possible to directly apply results from \parencite{Legoll2013}, on which we base our analysis. 
Indeed, it is not directly clear whether or not the evolution ODE of the multiscale covariance \eqref{OU_variance_full} 
and its reduced model \eqref{OU_variance_reduced}
satisfy the same required properties as 
those that are used in the convergence analysis in \parencite{Legoll2013} (namely the ODE \eqref{test_system_Samaey_2013} and the reduced model \eqref{definition_lambda}). 
We summarize the differences with the theory in \parencite{Legoll2013}:
\begin{enumerate}[(i)]
\item here the ODEs are not homogeneous;

\item the reduced (moment) ODE \eqref{OU_variance_reduced} is not derived from the multiscale (moment) ODE \eqref{OU_variance_full} using the reduction technique \eqref{definition_lambda}, but instead it is defined via applying a reduction technique on the underlying SDE (see also remark \ref{remark_covariance_reduced_model});

\item the coefficients of the submatrix $A_{\Sigma}(\epsilon)$ depend on the parameter $\epsilon$.
\end{enumerate}
We stick closely to the analysis in \parencite{Legoll2013}. 

The sequel is organized as follows.
In section \ref{section_homogenised_covariance} we study the model error of the homogeneous part of the ODEs \eqref{OU_variance_reduced} and its reduced model \eqref{OU_variance_full} (this corresponds to $\sigma = 0$).
In section \ref{section_properties_nonhomog_covariance} we study the model error of ODEs \eqref{OU_variance_reduced} and its reduced model \eqref{OU_variance_full} in the fully nonhomogeneous case.
In section \ref{section_Parareal_stuff} we present our main lemma, namely a convergence analysis of micro-macro Parareal for the evolution of the covariance (described by an inhomogeneous constant-coefficient system of ODEs).

\subsection{Properties of the homogeneous part of the ODE describing the covariance}
\label{section_homogenised_covariance}
In this section we study some properties of the homogeneous part of the ODE describing the multiscale evolution of the covariance, and the reduced model for the variance of the slow variable.
The key difference with the existing theory is that some coefficients of the submatrix $A_{\Sigma}(\epsilon)$ depend on $\epsilon$.
We first give a property of the matrix $A_{\Sigma}(\epsilon)$ that will be required for later use. 

\begin{property}[Property of the matrix $A_{\Sigma}$]
There exist constants $C>0$ and $\mu > 0$ such that, for all $\epsilon \in (0,1)$ and for all $t \geq 0$,
\begin{equation}
\norm{e^{-A_{\Sigma}(\epsilon)t}} 
\leq 
C e^{- \frac{\mu_{\Sigma,-}}{2} t}
\qquad \mathrm{and} \qquad
\norm{A_{\Sigma}^{-1}(\epsilon)} \leq \frac{C}{\mu_{\Sigma,-}}
\label{equation_norm_A_Sigma_inverse}
\end{equation}
\label{lemma_property_matrix_A_Sigma}
\end{property}

\begin{proof}
Since we assume that the eigenvalues of $A_{\Sigma}$ are all negative, for a given $\epsilon$, we can use \parencite[Lemma 15]{Legoll2013}: for all $\epsilon \in (0,1)$ it holds that
\begin{equation}
\norm{e^{-A_{\Sigma}(\epsilon)t}} 
\leq 
C e^{- \frac{\mu_{\Sigma,\epsilon,-}}{2} t} 
\qquad \mathrm{and} \qquad
\norm{A_{\Sigma}^{-1}(\epsilon)} 
\leq \frac{C}{\mu_{\Sigma,\epsilon,-}}.
\end{equation}

Since we assume that
$0 < \mu_{\Sigma,-} \leq \mu_{\Sigma,\epsilon,-}$
for all $\epsilon \in (0,1)$, it holds that
$e^{-\mu_{\Sigma,\epsilon,-}} 
\leq 
e^{-\mu_{\Sigma,-}} $ 
and that $\frac{C}{\mu_{\Sigma,\epsilon,-}} \leq \frac{C}{\mu_{\Sigma,-}}$. 
This ends the proof.
\end{proof}

Now we are in a position to study how the reduced model approximates the variance of the slow component of the multiscale system, cfr. lemma \ref{lemma_properties_multiscale_system_2013}. 
The proof is equivalent to the proof of \parencite[Lemma 2 and Corrolary 3]{Legoll2013}.
However, extra care is required because a different reduction technique is used to arrive at a reduced model (see also remark \ref{remark_covariance_reduced_model}).

\begin{lemma}[Properties of the homogeneous parts of the multiscale equation \eqref{OU_variance_full} \textcolor{black}{and its reduced model \eqref{OU_variance_reduced}}]
Consider the homogeneous part of system \eqref{OU_variance_full} with initial condition $\left[ \Sigma_x(0), \Sigma_{xy}, \Sigma_y \right] = 
\left[ \Sigma_{x,0}, \Sigma_{xy,0}, \Sigma_{y,0} \right]$, 
and its reduced model \eqref{OU_variance_reduced} 
(this homogeneous part corresponds to $\sigma = 0$), with initial condition $\Sigma_X(0) = \Sigma_{x,0}$.
Then, there exists $\epsilon_0 \in (0,1)$ and $C>0$, independent of $\epsilon$, such that for all $\epsilon < \epsilon_0$,
\begin{equation}
\sup_{t \in [0,T]} |\Sigma_x(t) - \Sigma_X(t)| 
\leq C \epsilon \left(
|\Sigma_{x,0}| + \norm{ 
\mathcal{R}_\mathrm{SDE}^{\perp} \left( \Sigma(0) \right)
- A_{\Sigma}^{-1}q_{\Sigma}\Sigma_{x,0} }
\right).
\label{theorem_1}
\end{equation}

Assuming that the eigenvalues $\mu_{\Sigma,i}(\epsilon)$ of the matrix $A_{\Sigma}(\epsilon)$ (see \eqref{OU_variance_full}) are all positive,
there exists $\epsilon_0 \in (0,1)$ and $C >0$, independent of $\epsilon$, such that
\begin{equation}
\begin{aligned}
\sup_{t \in [0,T]} 
|\Sigma_x(t)| 
&\leq
 C \left( |\Sigma_{x,0}| + \epsilon \norm{
\mathcal R_{\mathrm{SDE}}^{\perp} \left( \Sigma(0) \right) 
} \right ), 
\\
\sup_{t \in [t^{\mathrm{BL}}_{\Sigma, \epsilon},T]} 
\norm{ 
\mathcal R_{\mathrm{SDE}}^{\perp} \left( \Sigma \right) } 
&\leq
C \left( |\Sigma_{x,0}| + \epsilon \norm{
\mathcal R_{\mathrm{SDE}}^{\perp} \left( \Sigma(0) \right)
} 
\right)
\end{aligned}
\label{equation_properties_homogenised_system}
\end{equation}
\label{lemma_my_2013_eq_2_14}
where $t^{\mathrm{BL}}_{\Sigma, \epsilon}$ is the length of a boundary layer in time of the order of $\epsilon$:
\begin{equation*}
t^{\mathrm{BL}}_{\Sigma \epsilon} = \frac{2 \epsilon}{\mu_{\Sigma,-}} \ln\left( \frac{1}{\epsilon} \right).
\end{equation*}
\label{lemma_my_2013_eq_2_8}
\end{lemma}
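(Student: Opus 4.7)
The plan is to mirror the proof of \parencite[Lemma 2 and Corollary 3]{Legoll2013}, but with two careful adjustments to deal with the $\epsilon$-dependence of $A_\Sigma(\epsilon)$ and with the fact that the reduced model is obtained through averaging on the SDE rather than through the deterministic reduction \eqref{definition_lambda} (cfr.\ remark \ref{remark_covariance_reduced_model}). Write the homogeneous part of \eqref{OU_variance_full} in block form as
\begin{equation*}
\dot{\Sigma}_x = 2\alpha\, \Sigma_x + p_{\Sigma}^T \Sigma_{\mathrm{f}}, \qquad
\dot{\Sigma}_{\mathrm{f}} = \tfrac{1}{\epsilon} q_{\Sigma} \Sigma_x - \tfrac{1}{\epsilon} A_{\Sigma}(\epsilon) \Sigma_{\mathrm{f}},
\end{equation*}
where $\Sigma_{\mathrm{f}} = \mathcal R_{\mathrm{SDE}}^{\perp}(\Sigma) = (\Sigma_{xy}, \Sigma_y)^T$.

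First I would apply variation of parameters to the fast block and then integrate by parts (using that $A_\Sigma(\epsilon)$ is invertible by our assumptions and lemma \ref{lemma_inverse_B_SIGMA}) to obtain the standard ``slaved + boundary layer'' decomposition
\begin{equation*}
\Sigma_{\mathrm{f}}(t) = A_{\Sigma}^{-1} q_{\Sigma}\, \Sigma_x(t)
+ e^{-A_{\Sigma}(\epsilon) t/\epsilon}\!\left( \Sigma_{\mathrm{f}}(0) - A_{\Sigma}^{-1} q_{\Sigma} \Sigma_{x,0} \right)
- \int_0^t e^{-A_{\Sigma}(\epsilon)(t-s)/\epsilon} A_{\Sigma}^{-1} q_{\Sigma}\, \dot{\Sigma}_x(s)\, ds.
\end{equation*}
Property \ref{lemma_property_matrix_A_Sigma} controls both $\|e^{-A_\Sigma(\epsilon) t/\epsilon}\|$ by $Ce^{-\mu_{\Sigma,-}t/(2\epsilon)}$ and $\|A_\Sigma^{-1}\|$ by $C/\mu_{\Sigma,-}$, uniformly in $\epsilon \in (0,1)$. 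Substituting the decomposition back into the slow equation yields
\begin{equation*}
\dot{\Sigma}_x(t) = \lambda_{\Sigma,\epsilon}(\epsilon)\, \Sigma_x(t) + r(t),
\end{equation*}
where the remainder $r(t)$ gathers the boundary-layer term and the convolution, both of which can be bounded by $C\epsilon(|\Sigma_{x,0}| + \|\Sigma_{\mathrm{f}}(0) - A_\Sigma^{-1} q_\Sigma \Sigma_{x,0}\|)$ on $[0,T]$ via the exponential decay of $e^{-A_\Sigma t/\epsilon}$ (the convolution is bounded by $\epsilon$ times its integrand's sup-norm, and $\dot\Sigma_x$ is itself bounded by Gronwall).

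Next I would compare $\Sigma_x$ with the averaged variance $\Sigma_X$. Both satisfy scalar linear ODEs, with rates $\lambda_{\Sigma,\epsilon}(\epsilon)$ and $\lambda_\Sigma$ respectively. From the explicit formula \eqref{OU_lambda_reduced_model_exact} together with $\lim_{\epsilon\to 0}\lambda_{\Sigma,\epsilon} = \lambda_\Sigma$, a direct Taylor expansion gives $|\lambda_{\Sigma,\epsilon}(\epsilon) - \lambda_\Sigma| \leq C\epsilon$ uniformly for $\epsilon$ small enough. Writing the ODE for $\delta(t) := \Sigma_x(t) - \Sigma_X(t)$ as $\dot\delta = \lambda_{\Sigma,\epsilon}\delta + (\lambda_{\Sigma,\epsilon} - \lambda_\Sigma)\Sigma_X + r(t)$ with $\delta(0)=0$, and applying Gronwall on $[0,T]$, yields bound \eqref{theorem_1}. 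This comparison step is where the difference with \parencite{Legoll2013} matters most and is the main obstacle, since it is here that the two reduction techniques clash; the argument works only because Remark \ref{remark_covariance_reduced_model} quantifies the discrepancy between $\lambda_{\Sigma,\epsilon}$ and $\lambda_\Sigma$ as $O(\epsilon)$.

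Finally, the two bounds in \eqref{equation_properties_homogenised_system} follow directly from the same decomposition: the first by Gronwall on $\dot\Sigma_x = \lambda_{\Sigma,\epsilon}\Sigma_x + r(t)$ with initial data $\Sigma_{x,0}$, giving $\sup_{[0,T]}|\Sigma_x| \leq C(|\Sigma_{x,0}| + \epsilon\|\Sigma_{\mathrm{f}}(0)\|)$; and the second by inserting this slow bound into the explicit formula for $\Sigma_{\mathrm{f}}(t)$, noting that for $t \geq t^{\mathrm{BL}}_{\Sigma,\epsilon} = 2\epsilon\mu_{\Sigma,-}^{-1}\ln(1/\epsilon)$ one has $\|e^{-A_\Sigma t/\epsilon}\| \leq C\epsilon$, so the boundary-layer contribution is absorbed into the $\epsilon\|\Sigma_{\mathrm{f}}(0)\|$ term. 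The constants produced at each step depend only on $T$, $\mu_{\Sigma,-}$, $\|p_\Sigma\|$, $\|q_\Sigma\|$, $\lambda_{\Sigma,\pm}$ and $\alpha$, hence are independent of $\epsilon$, which completes the argument.
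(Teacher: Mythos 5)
Your proposal is correct and takes essentially the same approach as the paper: the genuinely new ingredient in both arguments is the explicit computation showing $|\lambda_{\Sigma,\epsilon}(\epsilon)-\lambda_{\Sigma}|\le C\epsilon$ (the paper's equations \eqref{equation_delta_lambda_sigma}--\eqref{bound_delta_lambda}), combined with the Legoll-type multiscale estimate applied with the $\epsilon$-dependent reduced rate $\lambda_{\Sigma,\epsilon}$ and the uniform bounds of Property \ref{lemma_property_matrix_A_Sigma}. The only real difference is presentational: the paper invokes that estimate (Lemma \ref{lemma_properties_multiscale_system_2013}, via a triangle inequality between $\Sigma_x(t)$, $e^{\lambda_{\Sigma,\epsilon}t}\Sigma_{x,0}$ and $e^{\lambda_{\Sigma}t}\Sigma_{x,0}$) as a black box, whereas you re-derive the slaved/boundary-layer decomposition and run Gr\"onwall on the difference ODE --- the same mechanism unpacked, with only the harmless imprecision that the boundary-layer contribution to your remainder $r(t)$ is $O(\epsilon)$ in the time-integrated sense rather than pointwise.
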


\begin{proof}
We first prove equation \eqref{theorem_1}.
We start by observing that
\begin{equation}
\begin{aligned}
&\sup_{t \in [0,T]}|\Sigma_x(t) -  e^{\lambda_{\Sigma}t}\Sigma_{x,0}|  \\
&\quad \leq 
\textcolor{black}{\sup_{t \in [0,T]}|\Sigma_x(t) -  e^{\lambda_{\Sigma,\epsilon} t}\Sigma_{x,0}| 
+
| e^{\lambda_{\Sigma,\epsilon} t} \Sigma_{x,0} - e^{\lambda_{\Sigma} t} \Sigma_{x,0}| } 
\end{aligned}
\label{equation_upper_bound_homog_lemma}
\end{equation}
We bound the first term of equation \eqref{equation_upper_bound_homog_lemma} using lemma \ref{lemma_properties_multiscale_system_2013} (equation \eqref{my_2013_eq_2_8}): there exists $\epsilon_0 \in (0,1)$ and a constant $C$, independent of $\epsilon$, such that for all $\epsilon < \epsilon_0$
\begin{equation}
\sup_{t \in [0,T]} 
|\Sigma_x(t) -  e^{\lambda_{\Sigma,\epsilon} t} \Sigma_{x,0}| \leq C \epsilon (|\Sigma_{x,0}| +  \| \Sigma_{z,0} \|).
\label{cool_equation}
\end{equation}
To bound the second term of equation \eqref{equation_upper_bound_homog_lemma}, we first define $\Delta \lambda_{\Sigma}  
= 
\lambda_{\Sigma}  - \lambda_{\Sigma,\epsilon}$, which satisfies
\begin{equation}
\begin{aligned}
\Delta \lambda_{\Sigma}  
&= 
2\left( \alpha - \frac{\beta \gamma}{\zeta} \right)
- \left(2 \alpha - \frac{2 \beta \gamma \zeta}{(\epsilon \alpha + \zeta)\zeta - \gamma \beta \epsilon} \right) \\ 
&= 
2 \beta \gamma \left( 
\frac{-(\epsilon \alpha + \zeta)\zeta + \gamma \beta \epsilon + \zeta^2}{\zeta \left[ (\epsilon \alpha + \zeta)\zeta - \gamma \beta \epsilon)\right]}  \right) \\
&= 
\frac{2\beta \gamma }{\zeta} \left[ \frac{ - \epsilon(\alpha \zeta - \gamma \beta)}{\epsilon(\alpha \zeta - \gamma \beta) + \zeta^2} \right] \\
&= 
-\frac{2 \beta \gamma}{\zeta} \frac{A \epsilon}{A \epsilon + B} \\
\end{aligned} 
\label{equation_delta_lambda_sigma}
\end{equation}
where $A$ and $B$ are constants, independent of $\epsilon$.
Now there exists $\epsilon_0 \in (0,1)$ and a constant $C$, independent of $\epsilon$, such that for all $\epsilon < \epsilon_0$,
\begin{equation}
|\Delta \lambda_{\Sigma}| \leq  \textcolor{black}{C \epsilon}.
\label{bound_delta_lambda}
\end{equation}
There exists a $\epsilon_0 \in (0,1)$ and a constant $C$, independent of $\epsilon$, such that for all $\epsilon < \epsilon_0$
\begin{equation}
\sup_{t \in [0,T]}
\left| e^{\lambda_{\Sigma,\epsilon} t} \Sigma_{x,0} - e^{\lambda_{\Sigma} t} \Sigma_{x,0} \right|
=
\sup_{t \in [0,T]}
e^{\lambda_{\Sigma}t}
\left|
1 - 
e^{
\Delta \lambda_{\Sigma} t
}
\right| 
\Sigma_{x,0}
\leq C \epsilon
\label{equation_helper_second_term}
\end{equation}
Finally, combining equations \eqref{cool_equation} and \eqref{equation_helper_second_term} there exist constants $C$, $K$ and $L$, independent of $\epsilon$, such that
\begin{equation}
\begin{aligned}
\sup_{t \in [0,T]}|\Sigma_x(t) - \Sigma_{x,0} e^{\lambda_{\Sigma}t}|
&\quad \leq 
	C \epsilon (|\Sigma_{x,0}| +  \| \Sigma_{z,0} \|) 
	+ K \epsilon |\Sigma_{x,0}| \\
&\quad \leq L \epsilon (|\Sigma_{x,0}| +  \| \Sigma_{z,0} \|),
\end{aligned}
\end{equation}
This proves equation \eqref{theorem_1}.
The proof of \eqref{equation_properties_homogenised_system} is similar to \parencite[Proof of lemma 3]{Legoll2013}.
\end{proof}

\subsection{Properties of the non-homogeneous ODE describing the covariance}
\label{section_properties_nonhomog_covariance}
In this section we study how the full inhomogeneous reduced model approximates the slow component of the inhomogeneous multiscale model, cfr. equation \eqref{theorem_1}.

\begin{lemma}[Approximation property of the reduced model \eqref{OU_variance_reduced} to the multiscale equation \eqref{OU_variance_full}]
Assume that the matrix $B_{\Sigma}$ is invertible.
For the multiscale ODE \eqref{OU_variance_full} 
and its reduced model \eqref{OU_variance_reduced}, 
there exist $\epsilon_0 \in (0,1)$ and $C>0$ and $K>0$, independent of $\epsilon$, such that, for all $\epsilon < \epsilon_0$
\begin{equation}
\sup_{t \in [0,T]} 
| \Sigma_x(t) - \Sigma_X(t)
| \leq C \epsilon
\label{theorem_1_non_homogeneous}
\end{equation}
where $\Sigma_{x}$ solves equation \eqref{OU_variance_full} and $\Sigma_{X}$ solves equation \eqref{OU_variance_reduced}.
\label{lemma_nonhomogeneous_error}
\end{lemma}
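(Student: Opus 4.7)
The plan is to convert the two inhomogeneous ODEs into homogeneous ones by subtracting their respective steady states, apply the already-proven homogeneous bound of Lemma \ref{lemma_my_2013_eq_2_8}, and separately handle the mismatch in steady-state offsets using the Schur complement formula of Lemma \ref{lemma_inverse_B_SIGMA}. Concretely, I would introduce the shifted variables $\tilde\Sigma(t) = \Sigma(t) + B_\Sigma^{-1} b_\Sigma$ and $\tilde\Sigma_X(t) = \Sigma_X(t) + \sigma^2/\lambda_\Sigma$, which satisfy the homogeneous versions of the ODEs \eqref{OU_variance_full} and \eqref{OU_variance_reduced} with initial data $\tilde\Sigma(0) = \Sigma_0 + B_\Sigma^{-1} b_\Sigma$ and $\tilde\Sigma_X(0) = \Sigma_{x,0} + \sigma^2/\lambda_\Sigma$, and split
\[
\Sigma_x(t) - \Sigma_X(t) = \bigl(\tilde\Sigma_x(t) - \tilde\Sigma_X(t)\bigr) - \delta, \qquad \delta \coloneqq (B_\Sigma^{-1}b_\Sigma)_1 - \sigma^2/\lambda_\Sigma.
\]

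For the constant offset $\delta$, I would use the explicit block inverse in Lemma \ref{lemma_inverse_B_SIGMA} applied to $b_\Sigma = (\sigma^2, 0, \sigma^2/\epsilon)^T$: the $1/\epsilon$ in the fast component cancels against the $\epsilon$ in the top-right block of $B_\Sigma^{-1}$, yielding
\[
(B_\Sigma^{-1} b_\Sigma)_1 = \lambda_{\Sigma,\epsilon}^{-1}\bigl(\sigma^2 + p_\Sigma^T A_\Sigma(\epsilon)^{-1}(0,\sigma^2)^T\bigr).
\]
A direct computation with $A_\Sigma(0)^{-1}$ shows $p_\Sigma^T A_\Sigma(0)^{-1}(0,\sigma^2)^T = 0$, so by smoothness of the matrix inverse in $\epsilon$ this bracket equals $\sigma^2 + O(\epsilon)$. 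Combined with $|\lambda_{\Sigma,\epsilon}^{-1} - \lambda_\Sigma^{-1}| \leq C\epsilon$ (which follows from the explicit form of $\Delta\lambda_\Sigma$ computed in \eqref{bound_delta_lambda}), this yields $|\delta| \leq C\epsilon$.

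For the remaining term $\tilde\Sigma_x(t) - \tilde\Sigma_X(t)$, the slow initial conditions do not quite match: they differ precisely by $\delta$. I would therefore introduce an auxiliary $\hat\Sigma_X$ solving the homogeneous reduced ODE with $\hat\Sigma_X(0) = \tilde\Sigma_x(0)$, and decompose
\[
\tilde\Sigma_x(t) - \tilde\Sigma_X(t) = \bigl(\tilde\Sigma_x(t) - \hat\Sigma_X(t)\bigr) + e^{\lambda_\Sigma t}\delta.
\]
The first summand is $O(\epsilon)$ by equation \eqref{theorem_1} of Lemma \ref{lemma_my_2013_eq_2_8} applied to $\tilde\Sigma$ and $\hat\Sigma_X$, which now satisfy matched homogeneous systems with aligned slow initial data; the second summand is bounded by $e^{\lambda_{\Sigma,+}T}|\delta| \leq C\epsilon$. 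Combining these with the offset estimate on $\delta$ delivers the claim.

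The main subtlety will be uniformity of all constants in $\epsilon$. Applying \eqref{theorem_1} requires $\|\tilde\Sigma(0)\|$ to stay bounded as $\epsilon \to 0$, which follows because the Schur complement formula shows both blocks of $B_\Sigma^{-1}b_\Sigma$ are $O(1)$ uniformly in $\epsilon$ under the assumed uniform lower bound on $\mu_{\Sigma,-}$ (Property \ref{lemma_property_matrix_A_Sigma}) and the assumed uniform bounds on $\lambda_{\Sigma,\epsilon}$. The smoothness-in-$\epsilon$ expansion used to bound $\delta$ likewise relies on $A_\Sigma(\epsilon)^{-1} = A_\Sigma(0)^{-1} + O(\epsilon)$, which I would justify via a Neumann-type series using Property \ref{lemma_property_matrix_A_Sigma}.
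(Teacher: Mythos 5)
Your proof is correct, and it reaches the result by a decomposition that differs from the paper's in a way worth noting. The paper writes both solutions by variation of constants with the \emph{original} initial data, splits off the homogeneous comparison $\bigl|\mathcal R_{\mathrm{SDE}}(e^{B_\Sigma t}\Sigma(0)) - \Sigma_{x,0}e^{\lambda_\Sigma t}\bigr|$ (handled by equation \eqref{theorem_1}), and then devotes a separate auxiliary lemma (lemma \ref{lemma_how_far_do_nonhomog_steady_states_lie_apart}) to the transient inhomogeneous term $\bigl|[e^{\lambda_\Sigma t}-1]\sigma^2/\lambda_\Sigma - \mathcal R_{\mathrm{SDE}}([e^{B_\Sigma t}-I]B_\Sigma^{-1}b_\Sigma)\bigr|$; that lemma in turn requires splitting $B_\Sigma^{-1}b_\Sigma$ into a vector $q$ with slow component $\sigma^2/\lambda_\Sigma$ plus a remainder $v$, and invoking the stability estimate \eqref{equation_properties_homogenised_system} to control the propagation of $v$ under the full matrix flow $e^{B_\Sigma t}$. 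You instead recenter both systems at their steady states first, so the only residual mismatch is the single scalar $\delta = (B_\Sigma^{-1}b_\Sigma)_1 - \sigma^2/\lambda_\Sigma$, which appears once as a constant and once propagated by the \emph{scalar} reduced flow $e^{\lambda_\Sigma t}$ (because you realign the slow initial datum via the auxiliary $\hat\Sigma_X$). The key ingredients are the same in both arguments --- the homogeneous approximation bound \eqref{theorem_1}, the Schur-complement expression for $B_\Sigma^{-1}b_\Sigma$ (your $\delta=O(\epsilon)$ estimate is precisely the content of lemma \ref{easy_corrolary}, and your uniform boundedness of the fast block of $B_\Sigma^{-1}b_\Sigma$ is lemma \ref{lemma_norm_inhomogeneity}) --- but your bookkeeping bypasses the $q$/$v$ splitting and the need for \eqref{equation_properties_homogenised_system} entirely, at the mild cost of having to verify that the shifted initial data $\tilde\Sigma(0)$ remain $O(1)$ in $\epsilon$, which you do. The only cosmetic slips are that the explicit form of $\Delta\lambda_\Sigma$ is \eqref{equation_delta_lambda_sigma} rather than \eqref{bound_delta_lambda}, and that the bound $|\lambda_{\Sigma,\epsilon}^{-1}-\lambda_\Sigma^{-1}|\le C\epsilon$ also uses the assumption (section \ref{section_overview_assumptions}) that both quantities are bounded away from zero uniformly in $\epsilon$; neither affects the validity of the argument.
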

\begin{proof}
We assume that the matrix $B_{\Sigma}$ is invertible (see section \ref{section_overview_assumptions}).
The solution to equation \eqref{OU_variance_full}, $\dot \Sigma = B_{\Sigma} \Sigma + b_{\Sigma}$, is given by
\begin{equation}
\begin{aligned}
\Sigma(t) 
&= e^{B_{\Sigma}t} \left( \Sigma(0) + B_{\Sigma}^{-1} b_{\Sigma} \right) - B_{\Sigma}^{-1} b_{\Sigma}^{-1} \\
&= e^{B_{\Sigma}t} \Sigma(0) + \left[ e^{B_{\Sigma}t} - I\right] B_{\Sigma}^{-1} b_{\Sigma}^{-1}
\end{aligned}
\end{equation}
The solution to equation \eqref{OU_variance_reduced}, $\dot \Sigma_{X} = \lambda_{\Sigma} \Sigma_X + \sigma^2$, is given by 
\begin{equation}
\begin{aligned}
\Sigma_{X}(t) 
&= e^{\lambda_{\Sigma} t} 
\left( \Sigma_{X}(0) + \frac{\sigma^2}{\lambda_{\Sigma}} \right) - \frac{\sigma^2}{\lambda_{\Sigma}} \\
&= e^{\lambda_{\Sigma} t} \Sigma_{X}(0) + \left[ e^{\lambda_{\Sigma} t} - 1 \right]  \frac{\sigma^2}{\lambda_{\Sigma}}
\end{aligned} 
\end{equation}
\noindent
Thus we have that 
\begin{equation}
\begin{aligned}
&\sup_{t \in [0,T]} |\Sigma_x(t) - \Sigma_X(t)| 
= \sup_{t \in [0,T]} 
| \mathcal R_{\mathrm{SDE}} (\Sigma(t)) - \Sigma_X(t)|  \\
&=   
\sup_{t \in [0,T]} 
\left | 
\mathcal R_{\mathrm{SDE}} \left( 
e^{B_{\Sigma}t} \Sigma(0) + \left[ e^{B_{\Sigma}t} - I\right] B_{\Sigma}^{-1} b_{\Sigma}^{-1}
\right)
- \left(
e^{\lambda_{\Sigma} t} \Sigma_{X}(0) + \left[ e^{\lambda_{\Sigma} t} - 1 \right]  \frac{\sigma^2}{\lambda_{\Sigma}}
\right) \right| \\
&\leq 
\sup_{t \in [0,T]} 
\left| \mathcal R_{\mathrm{SDE}} \left( e^{B_{\Sigma}t} \Sigma(0) 
\right) - \Sigma_{x,0} e^{\lambda_{\Sigma} t} 
\right| 
\\
& \qquad + 
\sup_{t \in [0,T]}
\left| \left[ e^{\lambda_{\Sigma} t} - 1 \right]  \frac{\sigma^2}{\lambda_{\Sigma}}
-
\mathcal R_{\mathrm{SDE}}
\left[ e^{B_{\Sigma}t} - I\right] B_{\Sigma}^{-1} b_{\Sigma}^{-1} \right|,
\end{aligned}
\end{equation}
where we used the triangle inequality in the last step.
The first term can be bounded by (i) writing $\mathcal R_{\mathrm{SDE}} \left( 
e^{B_{\Sigma}t} \Sigma(0) \right) = \Sigma_x(t)$ and (ii) then using lemma \ref{lemma_my_2013_eq_2_8} (equation \eqref{theorem_1}), 
as well as the fact that, since $A_{\Sigma}$ is assumed to be invertible for all $\epsilon \in (O,1)$, all the terms in the right-hand side of \eqref{theorem_1} can be bounded.

\noindent
The second term can be bounded using lemma \ref{lemma_how_far_do_nonhomog_steady_states_lie_apart} (equation \eqref{equation_transient_model_error}).
\end{proof}

\subsection{Convergence of micro-macro Parareal for the inhomogeneous ODE system describing the covariance}
\label{section_Parareal_stuff}
\noindent
The preceding lemmas allow us to formulate our main result. 
\begin{lemma}
[Convergence of micro-macro Parareal for evolution of covariance]
We use micro-macro Parareal, defined in \eqref{mM_Parareal_0}-\eqref {mM_Parareal_other_k}, with fine and coarse propagators the exact solution of the inhomogeneous ODE \eqref{OU_variance_full} and the reduced ODE \eqref{OU_variance_reduced}, respectively.
We use the coupling operators $\mathcal{R}_{\mathrm{SDE}}$, $\mathcal{M}_{\mathrm{SDE}}$ and $\mathcal{L}_{\mathrm{SDE}}$ defined in equation \eqref{coupling_operators_SDE_problem}.
Let 
$e_{\Sigma,n}^k = \Sigma_n^k - \Sigma(t_n)$ be the micro error
and let
$E_{\Sigma_x,n}^k 
= \mathcal R_{\mathrm{SDE}} \Sigma^k_n - \mathcal R_{\mathrm{SDE}} \Sigma(t_n) 
= (\Sigma_x)_n^k - \Sigma_x(t_n)$ be the macro error. 
Then there exists $\epsilon_0 \in (0,1)$, that only depends on $\alpha$, $p_{\Sigma}$, $q_{\Sigma}$, $A_{\Sigma}$ and $T$, such that, for all $\epsilon < \epsilon_0$ and all $\Delta t > t_{\textcolor{black}{\Sigma},\epsilon}^{\mathrm{BL}}$, there exists a constant $C_k$, independent of $\epsilon$, such that for all $k \geq 0$:
\begin{equation}
\sup_{0 \leq n \leq N} |E_{\Sigma,n}^k| \leq C_k \epsilon^{\ceil{(k+1)/2}}
\end{equation}
\begin{equation}
\sup_{0 \leq n \leq N} \|e_{\Sigma,n}^k\| \leq C_k \epsilon^{\floor{(k+1)/2}}
\end{equation}
\label{theorem_convergence_mMParareal_variance}
\end{lemma}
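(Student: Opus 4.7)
The plan is to adapt the proof of Lemma \ref{lemma_no_lifting} essentially step by step to the covariance system. The lifting operator $\mathcal L_{\mathrm{SDE}}$ freezes the fast moments $\Sigma_{xy}$ and $\Sigma_y$ at their initial values, which is structurally identical to the initial-value lifting $\mathcal L(X) = (X, y_0)$ of that lemma, so the target convergence rates $\epsilon^{\ceil{(k+1)/2}}$ and $\epsilon^{\floor{(k+1)/2}}$ match exactly. The three novel features of the covariance setting relative to \parencite{Legoll2013} --- the inhomogeneity $b_\Sigma$ in \eqref{OU_variance_full}, the $\epsilon$-dependence of the submatrix $A_\Sigma(\epsilon)$, and the fact that \eqref{OU_variance_reduced} is obtained by SDE averaging rather than by the ODE reduction \eqref{definition_lambda} --- are precisely what the preceding three subsections were designed to neutralize: Lemma \ref{convergence_Parareal_nonhomog_ODE} shows that the inhomogeneity does not enter the error recursion for $k \geq 1$; Property \ref{lemma_property_matrix_A_Sigma} supplies $\epsilon$-uniform bounds on $\|A_\Sigma^{-1}(\epsilon)\|$ and $\|e^{-A_\Sigma(\epsilon) t}\|$; and Lemma \ref{lemma_nonhomogeneous_error} delivers the $\mathcal O(\epsilon)$ model error in the presence of the inhomogeneity.

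For the base case $k=0$, the macro iterate is the exact flow of the reduced ODE \eqref{OU_variance_reduced}, so Lemma \ref{lemma_nonhomogeneous_error} yields directly
\begin{equation*}
\sup_{0 \leq n \leq N} |E_{\Sigma_x,n}^0| = \sup_{0 \leq n \leq N} |\Sigma_X(t_n) - \Sigma_x(t_n)| \leq C \epsilon = C \epsilon^{\ceil{1/2}} .
\end{equation*}
For the micro error, the lifting produces $u_n^0 = (\Sigma_X(t_n), \Sigma_{xy,0}, \Sigma_{y,0})$, while the truth is $\Sigma(t_n) = (\Sigma_x(t_n), \Sigma_{xy}(t_n), \Sigma_y(t_n))$, so by the triangle inequality
\begin{equation*}
\|e_n^0\| \leq |E_{\Sigma_x,n}^0| + \|\mathcal R_{\mathrm{SDE}}^\perp \Sigma(0)\| + \|\mathcal R_{\mathrm{SDE}}^\perp \Sigma(t_n)\| .
\end{equation*}
The first term is $\mathcal O(\epsilon)$, and the last two are bounded uniformly in $\epsilon$ by a constant, via the explicit solution of \eqref{OU_variance_full} combined with Property \ref{lemma_property_matrix_A_Sigma}. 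Since $\epsilon < 1$, this gives $\|e_n^0\| \leq C = C\epsilon^{\floor{1/2}}$, matching the claim at $k=0$.

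For the inductive step $k \mapsto k+1$, Lemma \ref{convergence_Parareal_nonhomog_ODE} guarantees that the macro and micro error recursions take exactly the same form as the homogeneous recursions \eqref{SISC_2013_4_25}--\eqref{SISC_2013_4_23} of \parencite{Legoll2013}, with the crucial $\epsilon$-factor pickup coming from the off-diagonal block $p_\Sigma^T$ of $B_\Sigma$ and the $\epsilon$-independent prefactors arising from Property \ref{lemma_property_matrix_A_Sigma}. The matching operator $\mathcal M_{\mathrm{SDE}}$ satisfies the obvious analogue of Property \ref{continuity_property_matching}, visible directly from its definition in \eqref{coupling_operators_SDE_problem}. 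Consequently, the bookkeeping table at the end of the proof of Lemma \ref{lemma_no_lifting} carries over verbatim and propagates one factor of $\epsilon$ every two iterations through the alternation of the macro and micro recursions, producing the stated bounds for all $k \geq 0$.

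The main obstacle --- to which essentially the entire preceding section was devoted --- was showing that the reduced covariance model genuinely loses only one factor of $\epsilon$ despite the $\epsilon$-dependence of $A_\Sigma(\epsilon)$ and the alternative (SDE-based) route to the reduction; this is the content of Lemma \ref{lemma_nonhomogeneous_error}, which in turn rests on the boundary-layer estimates of Lemma \ref{lemma_my_2013_eq_2_8}. A secondary technical point is the condition $\Delta t > t_{\Sigma,\epsilon}^{\mathrm{BL}}$, which places all coarse grid points $t_n$ with $n \geq 1$ outside the boundary layer, so that the uniform-in-$\epsilon$ bound on $\|\mathcal R_{\mathrm{SDE}}^\perp \Sigma(t_n)\|$ from \eqref{equation_properties_homogenised_system} is available at every coarse node exactly as in \parencite[proof of theorem 13]{Legoll2013}.
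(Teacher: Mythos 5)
Your proposal follows essentially the same route as the paper's proof: the base case is handled by Lemma \ref{lemma_nonhomogeneous_error} for the macro error and a uniform-in-$\epsilon$ bound on the fast covariance components for the micro error (the paper phrases this via Corollary \ref{lemma_distance_from_initial_condition}, but it is the same triangle-inequality argument), and the inductive step invokes Lemma \ref{convergence_Parareal_nonhomog_ODE} to reduce to the homogeneous recursion and then reuses the bookkeeping of Lemma \ref{lemma_no_lifting}. No substantive differences or gaps.
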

\begin{proof}
In the zeroth iteration, 
lemma \ref{lemma_nonhomogeneous_error} can be used to bound the macro error: 
\begin{equation}
|E^0_{\Sigma,n}| \leq C \epsilon.
\end{equation}
For the micro error in the zeroth iteration, we have that 
\begin{equation}
\begin{aligned}
\sup_{0 \leq n \leq N}\|e^0_{\Sigma,n}\| 
&\leq 
\sup_{0 \leq n \leq N} \left( 
|E^0_{\Sigma,n}| + 
\norm{ \left[ \begin{matrix} \Sigma_{xy}(t_n) \\ \Sigma_{y}(t_n) \end{matrix} \right] - \left[ \begin{matrix} 
\Sigma_{xy,0} \\ \Sigma_{y,0} \end{matrix} \right] }
\right).
\end{aligned}
\end{equation}
The second term can be bounded using lemma \ref{lemma_distance_from_initial_condition}. 
This proves the bound in the zeroth iteration.
\newline
For all subsequent iterations, lemma \ref{convergence_Parareal_nonhomog_ODE} states that the inhomogeneity has no influence on the propagation of the error. 
The rest of the proof is analogous to the proof of lemma \ref{lemma_no_lifting}. 
\end{proof}

\section{Numerical experiments}
\label{section_numerical_experiments}

The test parameters for the numerical experiments are chosen to be: 
\begin{equation}
\left[ \begin{matrix}
\alpha & \beta \\
\gamma & \zeta
\end{matrix} \right] 
= 
\left[ \begin{matrix}
-1 &&& -1 \\
0.1 &&& -1
\end{matrix} \right], 
\qquad
\sigma = 0.5
\label{all_test_parameters}
\end{equation}
The time interval is chosen as $[0, T] = [0, 10]$, the number of time intervals $N=10$, and the initial value
$
\left( \begin{matrix}
m_{x,0} & m_{q,0} & \Sigma_{x,0} & \Sigma_{q,0} & \Sigma_{xq,0}
\end{matrix} \right) = \left( \begin{matrix} 100 &100 & 0 & 0 & 0 \end{matrix} \right)$.


\subsection{Choice of model parameters}
First we check whether the test parameters \eqref{all_test_parameters} satisfy (some of) the assumptions listed in section \ref{section_overview_assumptions}.
We numerically study how the eigenvalues of $A_{\Sigma}(\epsilon)$ and the scalar $\lambda_{\Sigma,\epsilon}$ change as $\epsilon$ varies.
This is illustrated in figure \ref{figure_eps_vs_model_parameters}.
For all values $0 \leq \epsilon \leq 1$ it holds that the real parts of the eigenvalues of $A_{\Sigma}$ are strictly postive, thus $A_{\Sigma}(\epsilon)$ is invertible. 

\begin{figure}[H]
\includegraphics[width=0.5\textwidth]{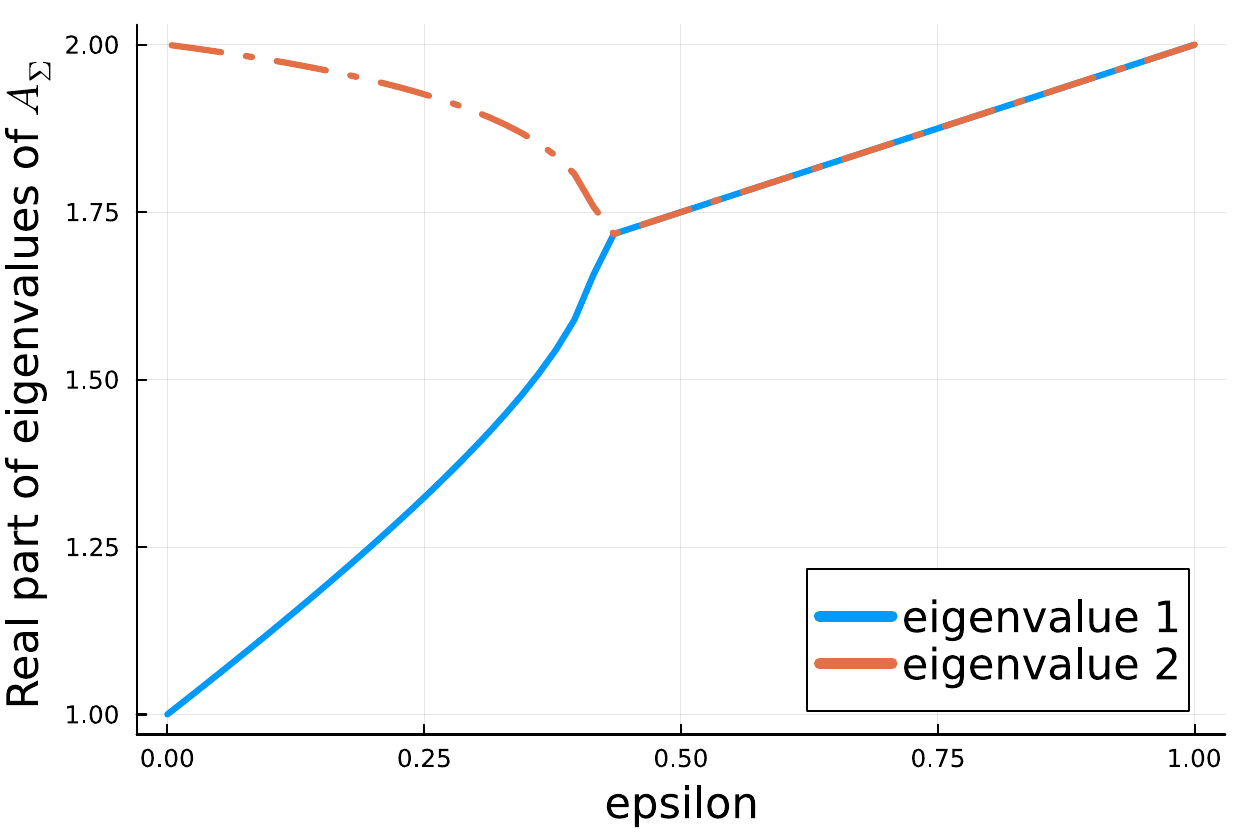}
\includegraphics[width=0.5\textwidth]{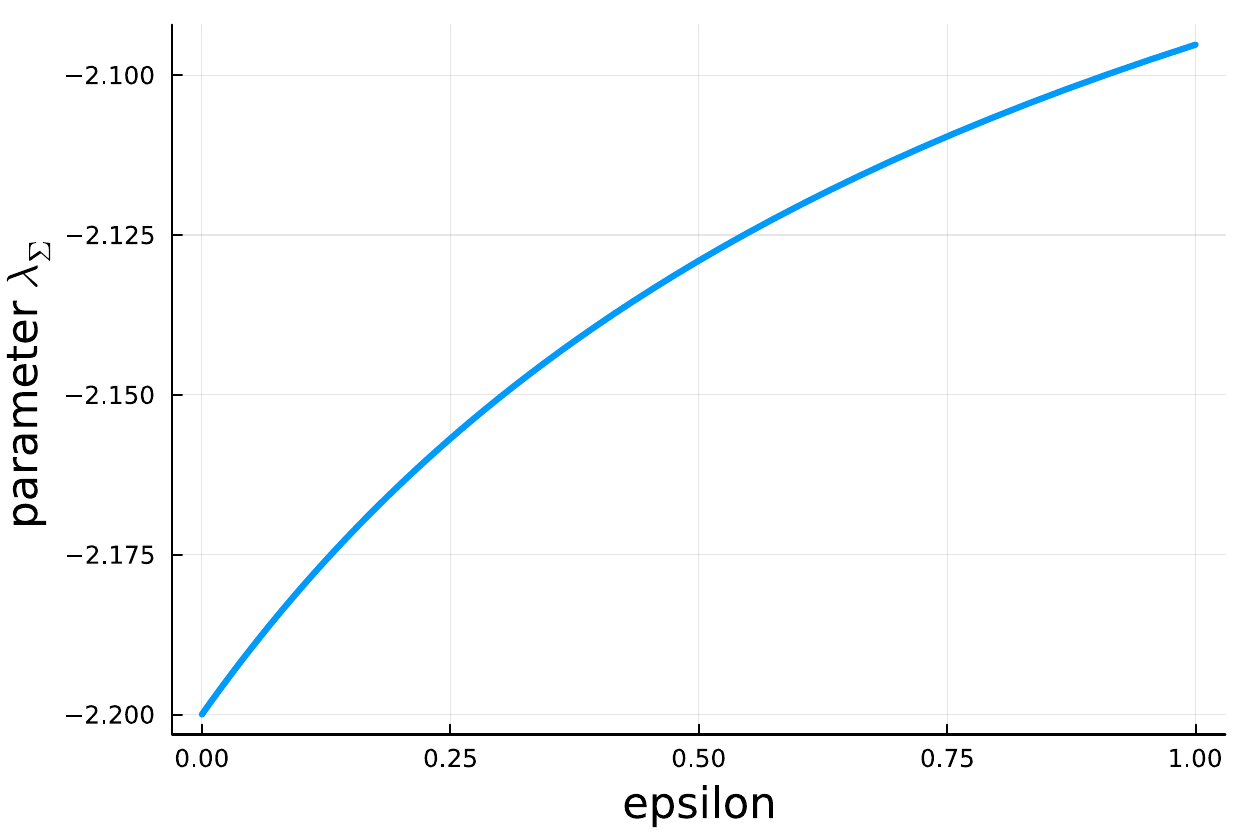}
\caption{As a function of $\epsilon$, we plotted (left) the real part of the eigenvalues of $A_{\Sigma}$ and (right) the value of $\mu_{\Sigma}$.}
\label{figure_eps_vs_model_parameters}
\end{figure}


\subsection{Convergence of micro-macro Parareal}
The numerical simulations are illustrated in figure \ref{numerical_experiments}. 
It is seen that the micro and macro errors \textcolor{black}{on} the mean follow the behavior given by lemma \ref{lemma_no_lifting}.
The errors on the variance follow the behavior as given by lemma \ref{theorem_convergence_mMParareal_variance}. Observe that micro-macro Parareal converges faster for computationally more expensive models (with small $\epsilon$), because the macro model becomes more accurate with respect to the micro model.

\begin{figure}
\includegraphics[width=0.5\textwidth]{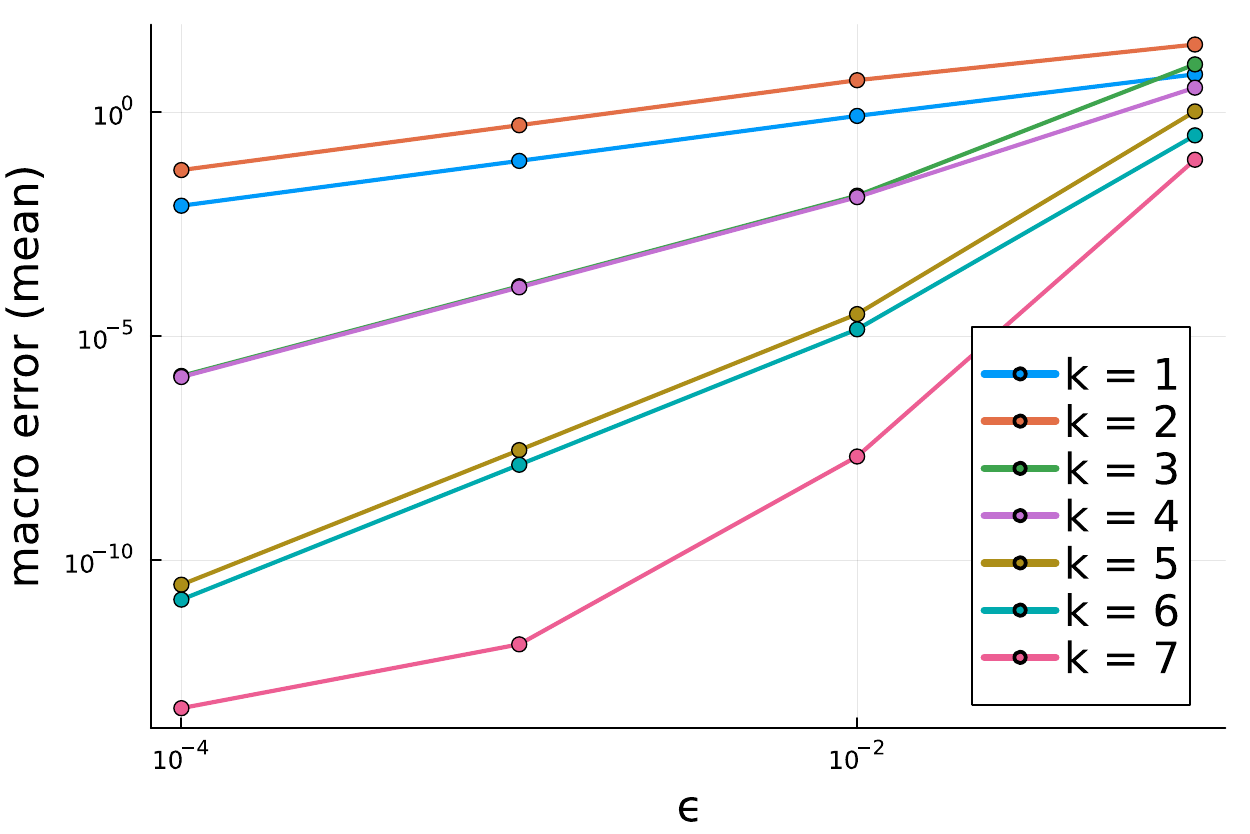}
\includegraphics[width=0.5\textwidth]{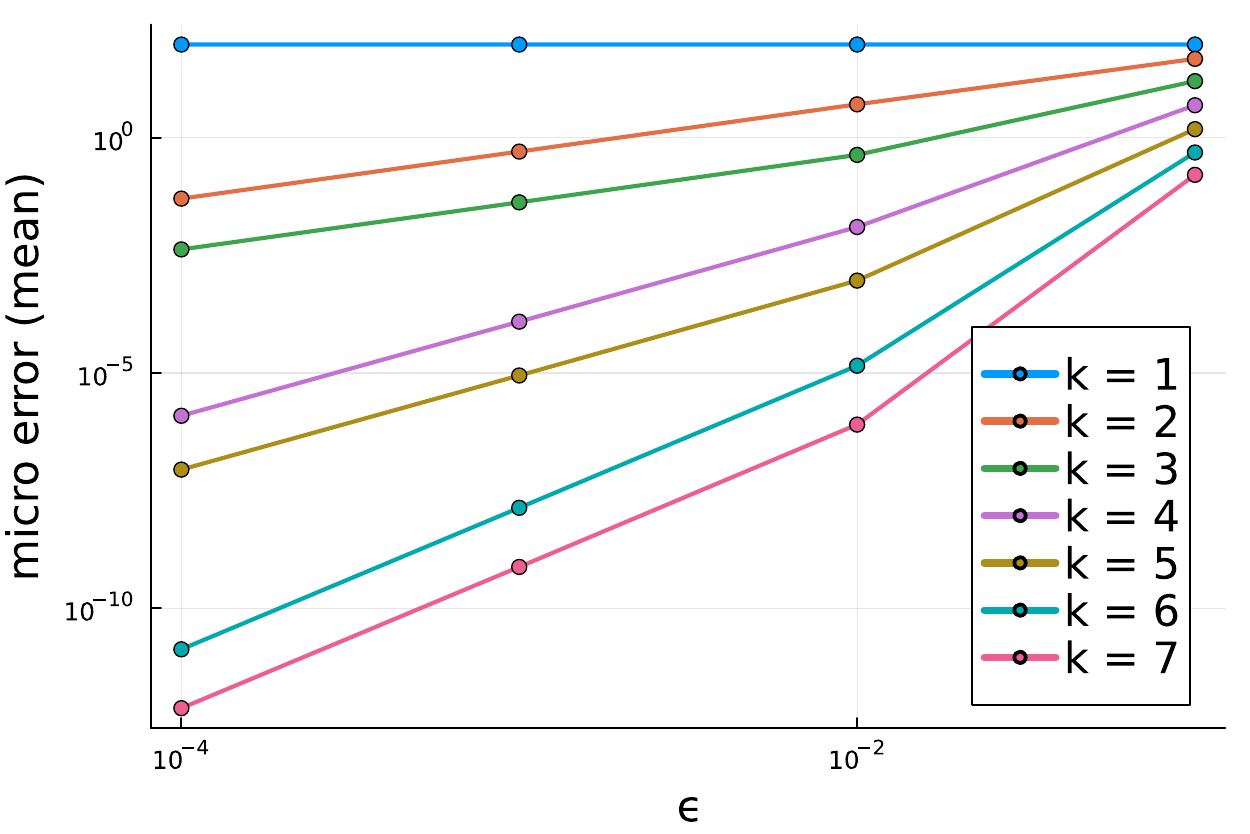}
\includegraphics[width=0.5\textwidth]{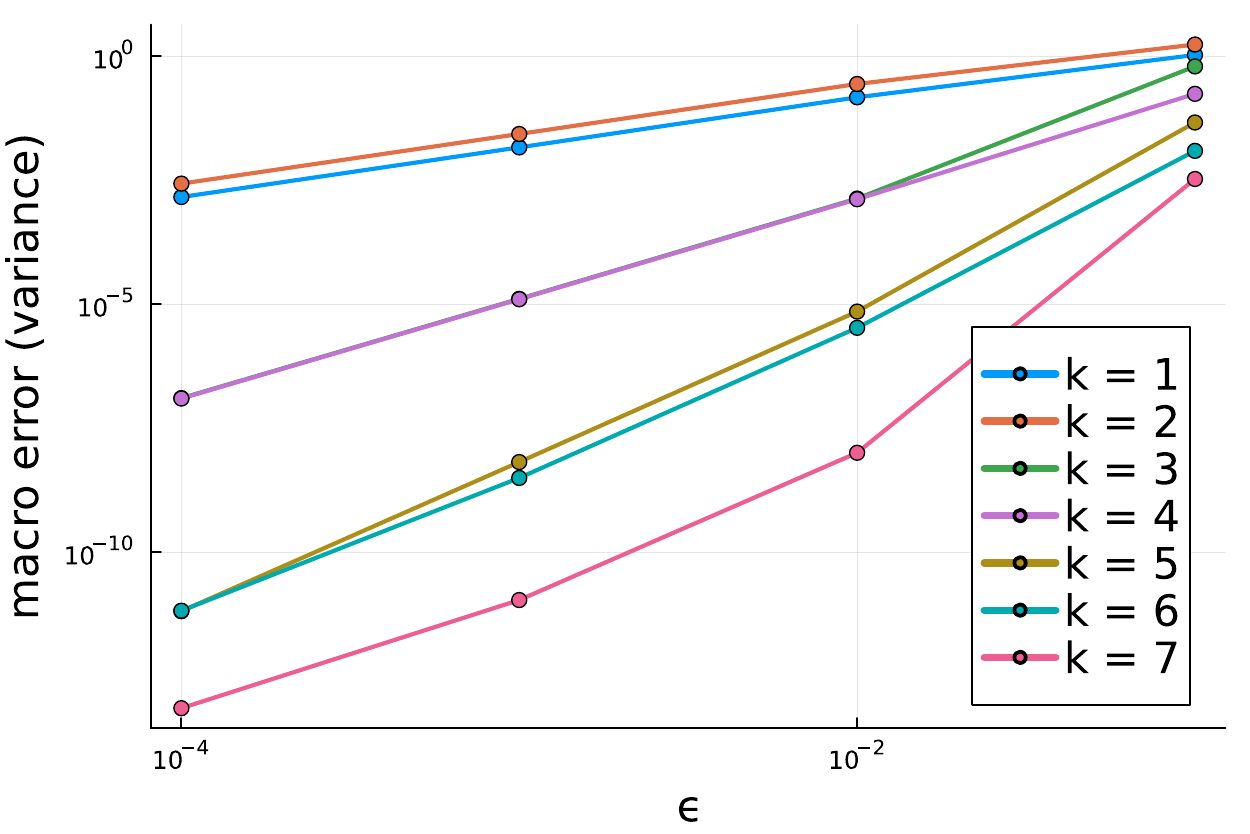}
\includegraphics[width=0.5\textwidth]{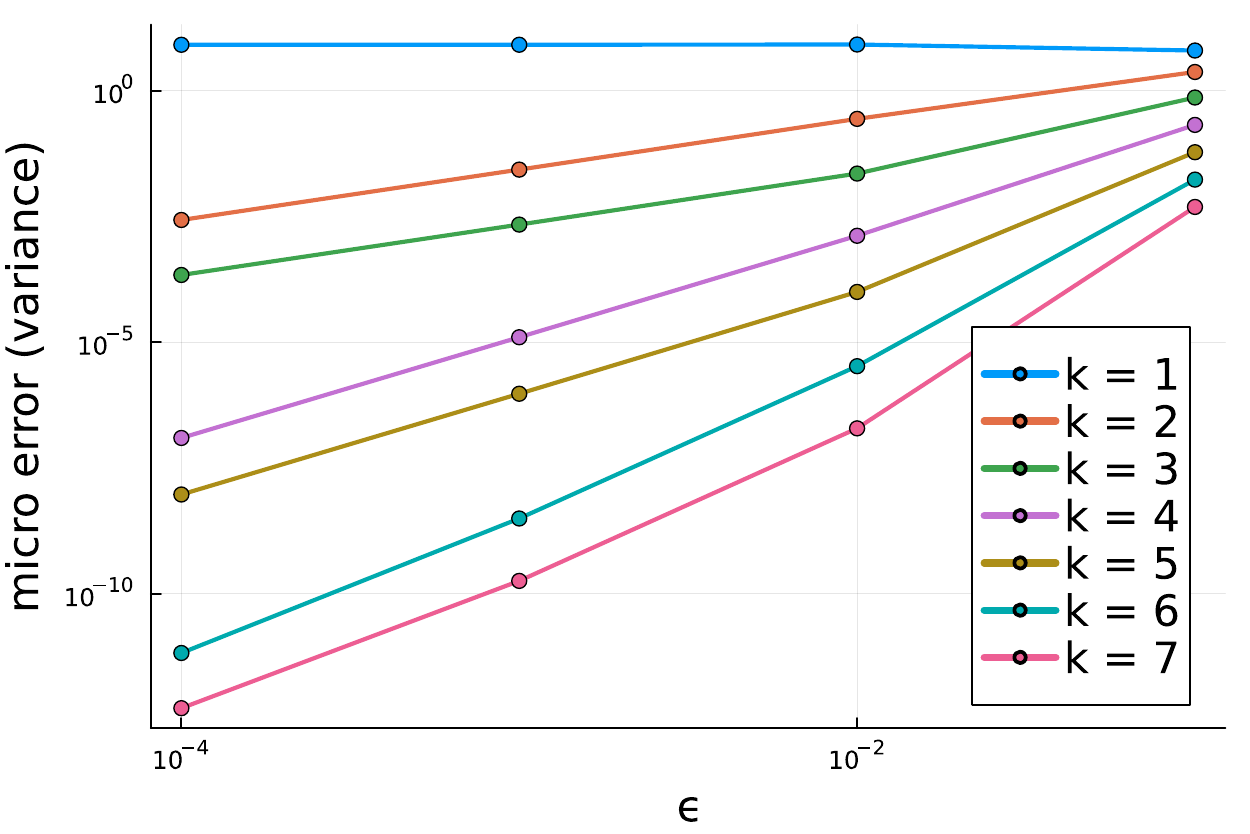}
\caption[Convergence of micro-macro Parareal for a multiscale Ornstein-Uhlenbeck SDE]{Error as function of time-scale separation parameter $\epsilon$. We used $\infty$-norm over time (only considering coarse discretization points) and the 2-norm for the micro error. 
\textbf{Top left}: macro error on mean, 
\textbf{Top right}: micro error on mean, 
\textbf{Bottom left}: macro error on variance,
\textbf{Bottom right}; micro error on variance. We used a numerical solver to discretize the moment equations \eqref{OU_mean_reduced} - \eqref{OU_variance_full} with a very stringent tolerance, so that the effect of numerical discretization errors can be neglected.}
\label{numerical_experiments}
\end{figure}

\section{Discussion and conclusion}
\-\hspace{0.3cm}
\textbf{Summary.}  
We presented a convergence analysis of the micro-macro Parareal algorithm applied on scale-separated Ornstein-Uhlenbeck SDEs. We analyzed its convergence behavior w.r.t. the time scale separation parameter $\epsilon$, using moment equations. The convergence of the first moment is closely related to the analysis in \parencite{Legoll2013}. For the covariance we presented some extensions to this theory.

\textbf{Limitations.}
While the analysis using moment equations quantifies the error on the mean and variance of the SDE solution, we cannot say anything about other quantities of interest, such as higher moments of the SDE solutions. 

Also, by using the moment equation (an ODE that we solved using very stringent tolerances), we exclusively looked at the model error, neglecting the discretization and statistical errors (in e.g. Monte Carlo simulations) that arise in the discretization of an SDE. 

\textbf{Open questions.}
It remains to be studied how the analysis generalizes to higher dimensions, for instance when the slow variable is multi-dimensional. 
Also, an extension of the convergence analysis could cover nonlinear SDEs, or linear SDEs for which there is a coupling between mean and variance in the moment ODEs.
Another open problem is the convergence analysis of the method w.r.t. the iteration number, instead of convergence w.r.t. the parameter $\epsilon$. This would be more useful in practice. 

\textbf{Software.}
The code that is used for the numerical experiments, is publicly available\footnote{\url{https://gitlab.kuleuven.be/numa/public/micro-macro Parareal-convergence-sde}}. We used the Julia language \parencite{bezanson_julia_2017} with the DifferentialEquations.jl package \parencite{rackauckas_differentialequationsjl_2017}.

\section{Proof of lemma \ref{convergence_Parareal_nonhomog_ODE}}
\label{proof_of_affine_Parareal}
\begin{proof}
The proof is similar to the derivation of equations (4.4) and (4.5) in \parencite{Legoll2013}, except that, here, the fine and coarse propagators are not linear but affine. 
With $B_{\mathcal F, n}$ and $B_{\mathcal C, n}$ we denote $B_{\mathcal F}(t_n)$ and $B_{\mathcal C}(t_n)$. 
To derive equation \eqref{correction_solution_Parareal} we proceed as follows. From equation \eqref{mM_Parareal_other_k}, it holds that
\begin{equation}
\begin{aligned}
U^{k+1}_n 
&= \mathcal R \left( 
A_{\mathcal F} u^k_{n-1} + B_{\mathcal F,n} \right) 
+ A_{\mathcal C} \left(  
U^{k+1}_{n-1} - U^k_{n-1} \right) \\
&= 
\mathcal R \left( A_{\mathcal F} u^k_{n-1} + B_{\mathcal F,n} \right)
+ A_{\mathcal C} \left( 
\mathcal R 
\left( A_{\mathcal F} u^k_{n-2} + B_{\mathcal F,n-1} 
\right)
+ A_{\mathcal C} \left(  
U^{k+1}_{n-2} - U^k_{n-2} \right) - U^k_{n-1} \right) \\ 
&= 
\hdots \\
&= 
\mathcal R 
\left( 
A_{\mathcal F} u^{k}_{n-1} + B_{\mathcal F,n}
\right)
+ \sum_{p=1}^{n-1} 
A_{\mathcal C}^{n-p} 
\left( \mathcal R 
\left( 
A_{\mathcal F} u^{k}_{p-1} + \mathcal R B_{\mathcal F,p}
\right)
- U^k_p \right)
\end{aligned}
\end{equation}
This proves equation \eqref{correction_solution_Parareal}.
Equation \eqref{correction_error_Parareal} can be obtained similarly to the derivation of equation (4.5) in \parencite{Legoll2013}, where the nonhomogeneous terms from the fine propagator cancel out.
\begin{equation}
\begin{aligned}
E^{k+1}_n 
&= 
U^{k+1}_n - \mathcal R u_n \\
&= 
U^{k+1}_n - \mathcal R 
\left( A_{\mathcal F} u_{n-1} + B_{\mathcal F,n} \right) \\
&= 
\mathcal R \left( 
A_{\mathcal F} u^{k}_{n-1} + B_{\mathcal F,n}
\right)
- \mathcal R \left( 
A_{\mathcal F} u_{n-1} + B_{\mathcal F,n} 
\right) 
+ \sum_{p=1}^{n-1} A_{\mathcal C}^{n-p} 
\left( \mathcal R 
\left( A_{\mathcal F} u^{k}_{p-1}  + B_{\mathcal F,p} \right)
- U^k_p \right) 
\\
&= 
\mathcal R A_{\mathcal F} e^{k}_{n-1}
+ \sum_{p=1}^{n-1} A_{\mathcal C}^{n-p} 
\left( \mathcal R 
\left( A_{\mathcal F} u^{k}_{p-1}  + B_{\mathcal F,p} 
\right)
- \mathcal R u_{p} + \mathcal R u_p
- U^k_p \right) \\
&= 
\mathcal R A_{\mathcal F} e^{k}_{n-1}
+ \sum_{p=1}^{n-1} A_{\mathcal C}^{n-p} 
\left( \mathcal R  \left( 
A_{\mathcal F} u^{k}_{p-1} + B_{\mathcal F,p} \right)
- \mathcal R \left( A_{\mathcal F} u_{p-1} + B_{\mathcal F,p} \right) + \mathcal R u_p 
- U^k_p \right) \\
&= \mathcal R A_{\mathcal F} e^{k}_{n-1}
+ \sum_{p=1}^{n-1} A_{\mathcal C}^{n-p} 
\left( \mathcal R A_{\mathcal F} e^{k}_{p-1} - E^k_p \right) \\
&= \sum_{p=1}^{n-1} A_{\mathcal C}^{n-p-1} 
\left( \mathcal R A_{\mathcal F} e^{k}_{p} 
- \sum_{p=1}^{n-1} A_{\mathcal C}^{n-p} E^k_p \right)
\end{aligned}
\end{equation}
From the second to the third line, we used equation \eqref{correction_solution_Parareal}.
In the fourth line, we added and subtracted $\mathcal R B_{\mathcal F}$ to the second factor of each term of the summation.
\end{proof}

\section{Auxiliary lemmas and proofs for lemma \ref{lemma_nonhomogeneous_error}}

\begin{lemma}[Property of the inhomogeneity in the multiscale model]
There exists a constants $C$, independent of $\epsilon$, such that for all $\epsilon \in (0,1)$
\begin{equation}
\norm{
\mathcal R^{\perp} _{\mathrm{SDE}}
\left( B_{\Sigma}^{-1} b_{\Sigma} \right)
} 
\leq C.
\end{equation}
\label{lemma_norm_inhomogeneity}
\end{lemma}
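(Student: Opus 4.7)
The plan is a direct computation using the explicit block-form of $B_{\Sigma}^{-1}$ from lemma \ref{lemma_inverse_B_SIGMA}. The operator $\mathcal R^{\perp}_{\mathrm{SDE}}$ picks out the lower block of $B_{\Sigma}^{-1} b_{\Sigma}$, so I would apply the lower block-row of $B_{\Sigma}^{-1}$ to the vector $b_{\Sigma} = (\sigma^2,\,0,\,\sigma^2/\epsilon)^T$. Carrying out the multiplication yields three terms: $\sigma^2 \lambda_{\Sigma,\epsilon}^{-1} A_{\Sigma}^{-1} q_{\Sigma}$, together with two terms coming from the lower-right block of $B_{\Sigma}^{-1}$ applied to the $(0,\sigma^2/\epsilon)^T$ part of $b_{\Sigma}$. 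The key observation is that the explicit $\epsilon$ factors appearing in the lower-right block of $B_{\Sigma}^{-1}$ exactly cancel the $1/\epsilon$ factor in $b_{\Sigma}$, so that after simplification no negative power of $\epsilon$ remains; schematically, the output is $\sigma^2 \lambda_{\Sigma,\epsilon}^{-1} A_{\Sigma}^{-1} q_{\Sigma} - \sigma^2 A_{\Sigma}^{-1} e_2 + \sigma^2 \lambda_{\Sigma,\epsilon}^{-1} A_{\Sigma}^{-1} q_{\Sigma} p_{\Sigma}^T A_{\Sigma}^{-1} e_2$, with $e_2 = (0,1)^T$.

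Next, I would bound each summand using the triangle inequality together with the $\epsilon$-uniform bounds already established. Property \ref{lemma_property_matrix_A_Sigma} gives $\norm{A_{\Sigma}(\epsilon)^{-1}} \leq C/\mu_{\Sigma,-}$ uniformly in $\epsilon$, and the non-vanishing assumption of section \ref{section_overview_assumptions} together with the explicit rational expression for $\lambda_{\Sigma,\epsilon}$ in \eqref{OU_lambda_reduced_model_exact} guarantees that $|\lambda_{\Sigma,\epsilon}^{-1}|$ is uniformly bounded for $\epsilon \in (0,1)$. Since $\sigma$, $p_{\Sigma}$ and $q_{\Sigma}$ are $\epsilon$-independent constants, the triangle inequality immediately yields an $\epsilon$-independent constant $C$ bounding $\norm{\mathcal R^{\perp}_{\mathrm{SDE}}(B_{\Sigma}^{-1} b_{\Sigma})}$.

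I expect no substantive obstacle; the proof is essentially algebraic bookkeeping. The only mildly subtle point is verifying the uniform bound on $|\lambda_{\Sigma,\epsilon}^{-1}|$, which one can argue from the hypotheses together with the explicit formula \eqref{OU_lambda_reduced_model_exact}, noting that $\lambda_{\Sigma,\epsilon}(\epsilon)$ is a continuous, non-vanishing function of $\epsilon$ on $(0,1)$ with a well-defined non-zero limit $\lambda_{\Sigma}$ as $\epsilon \to 0$; this is essentially a restatement of the assumptions in section \ref{section_overview_assumptions}.
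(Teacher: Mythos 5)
Your proposal is correct and follows essentially the same route as the paper's proof: extract the lower block of $B_{\Sigma}^{-1} b_{\Sigma}$ from the explicit Schur-complement inverse in lemma \ref{lemma_inverse_B_SIGMA}, observe that the $\epsilon$ factors in the lower-right block cancel the $1/\epsilon$ in $b_{\Sigma}$, and then bound the resulting terms via the triangle inequality, the uniform bound on $\norm{A_{\Sigma}^{-1}(\epsilon)}$ from property \ref{lemma_property_matrix_A_Sigma}, and the uniform bounds on $\lambda_{\Sigma,\epsilon}$ from section \ref{section_overview_assumptions}. Your extra remark justifying the uniform bound on $|\lambda_{\Sigma,\epsilon}^{-1}|$ via continuity and the non-zero limit is a slightly more careful articulation of what the paper simply assumes, but it is not a different argument.
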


\begin{proof}
For brevity in notation, we omit the explicit dependence of $A_{\Sigma}(\epsilon)$ on $\epsilon$.
From the definition of $\mathcal R^{\perp}_{\mathrm{SDE}}$ in equation \eqref{coupling_operators_SDE_problem} and from equation \eqref{equation_Schur_complement} it follows that
\begin{equation}
\mathcal R^{\perp}_{\mathrm{SDE}} \left( B_{\Sigma}^{-1} b_{\Sigma} \right)
= 
\frac{\sigma^2 }{\lambda_{\Sigma, \epsilon}}
\left( 
A_{\Sigma}^{-1} q_{\Sigma} + 
\left[ - \lambda_{\Sigma} A_{\Sigma}^{-1} +  A_{\Sigma}^{-1} q_{\Sigma}  p_{\Sigma}^T  A_{\Sigma}^{-1} \right] 
\left[ \begin{matrix}
0 \\ 1
\end{matrix} \right] 
\right)
\end{equation}
Then, using the triangle inequality and using the assumption $\lambda_{\Sigma,-}, \lambda_{\Sigma,\epsilon}, \lambda_{\Sigma,+}$ (see section \ref{section_overview_assumptions}), we obtain
\begin{equation}
\begin{aligned}
\norm{\mathcal R^{\perp}_{\mathrm{SDE}} \left( B_{\Sigma}^{-1} b_{\Sigma} \right)} 
&\leq 
\frac{\sigma^2}{\lambda_{\Sigma,-}}
\left( \norm{A_{\Sigma}^{-1}} \norm{q_{\Sigma}}
+
\lambda_{\Sigma,+} \norm{A_{\Sigma}^{-1}}
+
\norm{A_{\Sigma}^{-1}} \norm{q_{\Sigma}} \norm{p_{\Sigma}^T} 
\norm{A_{\Sigma}^{-1}}
\right).
\end{aligned}
\end{equation}
Using equation \eqref{equation_norm_A_Sigma_inverse}, and the fact that there exist constants $K$ and $L$ and $M$, independent of $\epsilon$, 
such that $\norm{q_{\Sigma}} \leq K$ 
and $\norm{p_{\Sigma}^T} \leq L$
and $\norm{A^{-1}_{\Sigma}} \leq M$ (by equation \eqref{equation_norm_A_Sigma_inverse}),
we have
\begin{equation}
\norm{R_{\mathrm{SDE}}^{\perp} \left( B_{\Sigma}^{-1} b_{\Sigma} \right)} 
\leq 
C.
\end{equation}
\end{proof}

\begin{corollary}[Distance between steady state and initial condition]
There exists a constant $C$, independent of $\epsilon$, such that for all $\epsilon \in (0,1)$
\begin{equation}
\norm{
\mathcal R^{\perp}_{\mathrm{SDE}} 
\left(
\Sigma_0 - B_{\Sigma}^{-1}b_{\Sigma}
\right)} 
\leq 
\norm{ R^{\perp}_{\mathrm{SDE}} 
\left( \Sigma_0 \right) } + C
\end{equation}
\label{lemma_distance_from_initial_condition}
\end{corollary}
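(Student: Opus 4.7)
The plan is to observe that the corollary follows almost immediately from lemma \ref{lemma_norm_inhomogeneity} by linearity and the triangle inequality. First I would use linearity of the restriction operator $\mathcal R^{\perp}_{\mathrm{SDE}}$ (which is just the projection onto the last two components, see its definition via \eqref{coupling_operators_SDE_problem}) to write
\begin{equation*}
\mathcal R^{\perp}_{\mathrm{SDE}} \left( \Sigma_0 - B_{\Sigma}^{-1} b_{\Sigma} \right)
= \mathcal R^{\perp}_{\mathrm{SDE}} (\Sigma_0) - \mathcal R^{\perp}_{\mathrm{SDE}} (B_{\Sigma}^{-1} b_{\Sigma}).
\end{equation*}

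Then I would apply the triangle inequality to obtain
\begin{equation*}
\norm{\mathcal R^{\perp}_{\mathrm{SDE}} (\Sigma_0 - B_{\Sigma}^{-1} b_{\Sigma})}
\leq \norm{\mathcal R^{\perp}_{\mathrm{SDE}} (\Sigma_0)} + \norm{\mathcal R^{\perp}_{\mathrm{SDE}} (B_{\Sigma}^{-1} b_{\Sigma})}.
\end{equation*}
Finally, the second term on the right-hand side is bounded uniformly in $\epsilon \in (0,1)$ by lemma \ref{lemma_norm_inhomogeneity}, which yields a constant $C > 0$ independent of $\epsilon$ such that $\norm{\mathcal R^{\perp}_{\mathrm{SDE}} (B_{\Sigma}^{-1} b_{\Sigma})} \leq C$. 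Combining these two estimates gives the claimed bound.

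There is essentially no main obstacle here: the corollary is a direct consequence of lemma \ref{lemma_norm_inhomogeneity} combined with the triangle inequality. The only subtlety is the implicit use of linearity of $\mathcal R^{\perp}_{\mathrm{SDE}}$, which is immediate from its coordinate-selection definition in \eqref{coupling_operators_SDE_problem}. The uniformity in $\epsilon$ is entirely inherited from lemma \ref{lemma_norm_inhomogeneity}, whose proof in turn relies on the standing assumptions on $\lambda_{\Sigma}, \lambda_{\Sigma,\epsilon}$ and on property \ref{lemma_property_matrix_A_Sigma} for bounding $\norm{A_{\Sigma}^{-1}}$; no further estimate on $\Sigma_0$ is needed since the norm $\norm{\mathcal R^{\perp}_{\mathrm{SDE}}(\Sigma_0)}$ is kept as is in the final bound.
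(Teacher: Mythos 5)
Your proposal is correct and matches the paper's own proof exactly: both split $\mathcal R^{\perp}_{\mathrm{SDE}}(\Sigma_0 - B_{\Sigma}^{-1}b_{\Sigma})$ by linearity and the triangle inequality, then bound $\norm{\mathcal R^{\perp}_{\mathrm{SDE}}(B_{\Sigma}^{-1}b_{\Sigma})}$ uniformly in $\epsilon$ via Lemma \ref{lemma_norm_inhomogeneity}. No gaps.
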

\begin{proof}
\begin{equation}
\begin{aligned}
\norm{
\mathcal R_{\mathrm{SDE}}^{\perp} \left((
\Sigma_0 - B_{\Sigma}^{-1}b_{\Sigma} \right)}
&\leq 
\norm{
\mathcal R_{\mathrm{SDE}}^{\perp} \left( \Sigma_0 \right) }
+
\norm{
\mathcal R_{\mathrm{SDE}}^{\perp} \left( B_{\Sigma}^{-1}b_{\Sigma} \right)} \\
&\leq
\norm{
\mathcal R_{\mathrm{SDE}}^{\perp} \left( \Sigma_0 \right) }
+
C
\end{aligned}
\end{equation}
where the second term is bounded using lemma \ref{lemma_norm_inhomogeneity}.
\end{proof}

\begin{lemma}[Steady-state (reduced) model error of the variance of the slow variable]
There exists $\epsilon_0 \in (0,1)$ and a constant $C$, independent of $\epsilon$, such that for all $\epsilon < \epsilon_0$
for all $\epsilon \in (0,1)$
\begin{equation}
\left| \frac{\sigma^2}{\lambda_{\Sigma}}
-
\mathcal R_{\mathrm{SDE}} 
\left( B_{\Sigma}^{-1} b_{\Sigma}^{-1}
\right)
 \right| 
\leq C \epsilon.
\label{blable}
\end{equation}
\label{easy_corrolary}
\end{lemma}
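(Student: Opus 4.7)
The plan is to compute $\mathcal R_{\mathrm{SDE}}(B_\Sigma^{-1} b_\Sigma)$ explicitly via the block-inverse formula of Lemma \ref{lemma_inverse_B_SIGMA}, and then compare the result to $\sigma^2/\lambda_\Sigma$ using the already-established smallness of $\lambda_{\Sigma,\epsilon} - \lambda_\Sigma$. Note that $-B_\Sigma^{-1} b_\Sigma$ is the steady state of the multiscale covariance ODE \eqref{OU_variance_full}, and $-\sigma^2/\lambda_\Sigma$ is the steady state of the reduced variance ODE \eqref{OU_variance_reduced}, so the claim is really a model-error statement at equilibrium.

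First I would split the inhomogeneity $b_\Sigma = [\sigma^2,\, 0,\, \sigma^2/\epsilon]^T$ into its slow scalar component $\sigma^2$ and its fast vector component $b_{\mathrm{fast}} = (\sigma^2/\epsilon)[0,1]^T$. Substituting into \eqref{equation_Schur_complement} and extracting the first (slow) entry via $\mathcal R_{\mathrm{SDE}}$, the factor $\epsilon$ in the off-diagonal block $\epsilon\, p_\Sigma^T A_\Sigma^{-1}$ cancels the $1/\epsilon$ scaling of $b_{\mathrm{fast}}$, yielding
\begin{equation*}
\mathcal R_{\mathrm{SDE}}(B_\Sigma^{-1} b_\Sigma) \;=\; \lambda_{\Sigma,\epsilon}^{-1}\bigl(\sigma^2 + \sigma^2\, p_\Sigma^T A_\Sigma^{-1}\,[0,1]^T\bigr).
\end{equation*}

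Next I would show that $p_\Sigma^T A_\Sigma^{-1}[0,1]^T = O(\epsilon)$. Reading $p_\Sigma^T = [2\beta, 0]$ from the block decomposition of \eqref{OU_variance_full}, together with the explicit form of $A_\Sigma(\epsilon)$, a short Cramer's-rule computation gives $p_\Sigma^T A_\Sigma^{-1}[0,1]^T = 2\beta^2 \epsilon/\det A_\Sigma(\epsilon)$; by the spectral assumptions of section \ref{section_overview_assumptions}, $|\det A_\Sigma(\epsilon)| = \prod_i |\mu_{\Sigma,i}(\epsilon)|$ stays bounded away from zero uniformly for $\epsilon \in (0,1)$, so this quantity is indeed $\epsilon$ times an $\epsilon$-uniform constant.

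Finally I would combine this with the scalar bound $|\lambda_{\Sigma,\epsilon} - \lambda_\Sigma| \leq C\epsilon$ from \eqref{bound_delta_lambda} and the lower bound $|\lambda_\Sigma|,|\lambda_{\Sigma,\epsilon}| \geq \lambda_{\Sigma,-} > 0$ from section \ref{section_overview_assumptions}. Writing
\begin{equation*}
\frac{\sigma^2}{\lambda_\Sigma} - \mathcal R_{\mathrm{SDE}}(B_\Sigma^{-1} b_\Sigma) \;=\; \sigma^2\,\frac{\lambda_{\Sigma,\epsilon} - \lambda_\Sigma}{\lambda_\Sigma\,\lambda_{\Sigma,\epsilon}} \;-\; \frac{\sigma^2\, p_\Sigma^T A_\Sigma^{-1}[0,1]^T}{\lambda_{\Sigma,\epsilon}},
\end{equation*}
the triangle inequality bounds each term by a constant multiple of $\epsilon$, proving \eqref{blable}. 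The only mildly delicate step is the first one --- the exact cancellation between the $\epsilon$ in the off-diagonal block and the $1/\epsilon$ in $b_{\mathrm{fast}}$ --- but this is bookkeeping once the scalings are tracked carefully; the rest of the argument is standard scalar manipulation with uniform bounds supplied by section \ref{section_overview_assumptions}.
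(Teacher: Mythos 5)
Your proposal is correct and follows essentially the same route as the paper's proof: both use the Schur-complement formula of Lemma \ref{lemma_inverse_B_SIGMA} to compute $\mathcal R_{\mathrm{SDE}}(B_\Sigma^{-1}b_\Sigma) = \lambda_{\Sigma,\epsilon}^{-1}\sigma^2 + O(\epsilon)$ (exploiting the cancellation of $\epsilon$ with $1/\epsilon$ in $b_\Sigma$), and both control the remaining discrepancy $|\sigma^2/\lambda_\Sigma - \sigma^2/\lambda_{\Sigma,\epsilon}|$ via the bound \eqref{bound_delta_lambda} on $\Delta\lambda_\Sigma$. The only cosmetic difference is that you keep the $1/\det A_\Sigma(\epsilon)$ factor explicit and justify its uniform lower bound from the spectral assumption, where the paper's equation \eqref{yow_eq} writes $p_\Sigma^T A_\Sigma^{-1}$ without it; your version is the more careful of the two and reaches the same conclusion.
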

\begin{proof}
By the triangle inequality, it holds that 
\begin{equation}
\left| \frac{\sigma^2}{\lambda_{\Sigma}}
-
\mathcal R_{\mathrm{SDE}}
\left( B_{\Sigma}^{-1} b_{\Sigma}^{-1}
\right) \right|  
\leq 
\left| \frac{\sigma^2}{\lambda_{\Sigma, \epsilon}} - 
\frac{\sigma^2}{\lambda_{\Sigma}}
\right|
+ 
\left| \frac{\sigma^2}{\lambda_{\Sigma, \epsilon}}
-
\mathcal R_{\mathrm{SDE}} 
\left( B_{\Sigma}^{-1} b_{\Sigma}
\right) \right| 
\label{equation_with_two_terms}
\end{equation}
We can bound the first term of \eqref{equation_with_two_terms}, 
using equation \eqref{bound_delta_lambda}: there exists $\epsilon_0 \in (0,1)$ and a constant $C$, independent of $\epsilon$ such that for all $\epsilon < \epsilon_0$
\begin{equation}
\begin{aligned}
\left| \frac{\sigma^2}{\lambda_{\Sigma, \epsilon}} - 
\frac{\sigma^2}{\lambda_{\Sigma}}
\right|
&= 
\left| \frac{\sigma^2}{\lambda_{\Sigma, \epsilon} + \Delta \lambda_{\Sigma}}
-
\frac{\sigma^2}{\lambda_{\Sigma, \epsilon}}
\right|  \\
&= \sigma^2 \left| 
\frac{\Delta \lambda_{\Sigma}}{\lambda_{\Sigma, \epsilon}
(\lambda_{\Sigma, \epsilon} + \Delta \lambda_{\Sigma})}
\right|  \\
&\leq C \epsilon 
\end{aligned}
\label{bound_first_part}
\end{equation}

For the second term in equation \eqref{equation_with_two_terms}, 
we obtain from equation \eqref{equation_Schur_complement} and after substituting the elements of $b_{\Sigma}$, 
\begin{equation}
\begin{aligned}
\mathcal R_{\mathrm{SDE}}
\left( B_{\Sigma}^{-1} b_{\Sigma} \right) 
&= \lambda_{\Sigma,\epsilon}^{-1} \sigma^2 +
\epsilon \lambda_{\Sigma,\epsilon}^{-1}p_{\Sigma}^T A_{\Sigma}(\epsilon)^{-1} \left[ \begin{matrix}
0 \\ {\sigma^2}/{\epsilon}
\end{matrix} \right] \\
&= \lambda_{\Sigma,\epsilon}^{-1} \sigma^2 +
\epsilon \lambda_{\Sigma,\epsilon}^{-1}
 \left[ \begin{matrix}
2 \beta \zeta & -2 \beta^2 \epsilon
\end{matrix} \right]
 \left[ \begin{matrix}
0 \\ {\sigma^2}/{\epsilon}
\end{matrix} \right] \\
&= \lambda_{\Sigma,\epsilon}^{-1} \sigma^2 
-
2 \epsilon \lambda_{\Sigma,\epsilon}^{-1} \beta^2 \sigma^2 \\
\end{aligned}
\label{yow_eq}
\end{equation}
Thus we have that there exists a constant $C$, independent of $\epsilon$ such that
\begin{equation}
\left| \frac{\sigma^2}{\lambda_{\Sigma,\epsilon}} - 
\mathcal R_{\mathrm{SDE}}
\left( B_{\Sigma}^{-1} b_{\Sigma}
\right) \right| \leq C \epsilon
\label{bound_second_part}
\end{equation}
The combination of the bounds \eqref{bound_first_part} and \eqref{bound_second_part} leads to equation \eqref{blable}.
\end{proof}

\begin{lemma}[Transient model error of the variance of the slow variable]
There exists a constant $C$, independent of $\epsilon$, such that for all $\epsilon \in (0,1)$
\begin{equation}
\sup_{t \in [0,T]}
\left| \left[ e^{\lambda_{\Sigma} t} - 1 \right]  \frac{\sigma^2}{\lambda_{\Sigma}}
-
\mathcal R_{\mathrm{SDE}}
\left( 
\left[ e^{B_{\Sigma}t} - I\right] B_{\Sigma}^{-1} b_{\Sigma}^{-1} 
\right)
\right|
\leq 
C \epsilon
\label{equation_transient_model_error}
\end{equation}
where $I$ is the identity matrix.
\label{lemma_how_far_do_nonhomog_steady_states_lie_apart}
\end{lemma}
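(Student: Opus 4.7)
The plan is to decompose the difference by inserting intermediate quantities and then reuse earlier lemmas. Setting $c = \sigma^2/\lambda_\Sigma$ and $v = B_\Sigma^{-1} b_\Sigma$, first I would write
\[
\bigl[e^{\lambda_\Sigma t} - 1\bigr]c - \mathcal{R}_{\mathrm{SDE}}\bigl([e^{B_\Sigma t} - I]\,v\bigr) = \bigl(e^{\lambda_\Sigma t}c - \mathcal{R}_{\mathrm{SDE}}(e^{B_\Sigma t}v)\bigr) - \bigl(c - \mathcal{R}_{\mathrm{SDE}}(v)\bigr).
\]
The second parenthesis is exactly the steady-state model error already handled by lemma \ref{easy_corrolary}, so it is bounded by $C\epsilon$.

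For the first parenthesis, I would add and subtract $e^{\lambda_\Sigma t}\mathcal{R}_{\mathrm{SDE}}(v)$ and apply the triangle inequality to obtain
\[
\bigl|e^{\lambda_\Sigma t}c - \mathcal{R}_{\mathrm{SDE}}(e^{B_\Sigma t}v)\bigr| \leq e^{\lambda_{\Sigma,+}T}\,\bigl|c - \mathcal{R}_{\mathrm{SDE}}(v)\bigr| + \bigl|\mathcal{R}_{\mathrm{SDE}}(e^{B_\Sigma t}v) - e^{\lambda_\Sigma t}\mathcal{R}_{\mathrm{SDE}}(v)\bigr|.
\]
The first piece is again $O(\epsilon)$ by lemma \ref{easy_corrolary}, since $\lambda_\Sigma$ is uniformly bounded above by the $\epsilon$-independent constant $\lambda_{\Sigma,+}$. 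The second piece is precisely the model error on the \emph{homogeneous} slow variance: $\mathcal{R}_{\mathrm{SDE}}(e^{B_\Sigma t}v)$ is the slow component of the homogeneous multiscale covariance ODE with initial datum $v$, while $e^{\lambda_\Sigma t}\mathcal{R}_{\mathrm{SDE}}(v)$ is the homogeneous reduced solution started at $\mathcal{R}_{\mathrm{SDE}}(v)$. Lemma \ref{lemma_my_2013_eq_2_8} (equation \eqref{theorem_1}) therefore yields the upper bound
\[
C\epsilon\bigl(|\mathcal{R}_{\mathrm{SDE}}(v)| + \|\mathcal{R}_{\mathrm{SDE}}^\perp(v) - A_\Sigma^{-1}q_\Sigma\mathcal{R}_{\mathrm{SDE}}(v)\|\bigr).
\]

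To conclude, I would verify that this prefactor is bounded by a constant independent of $\epsilon$. The scalar $|\mathcal{R}_{\mathrm{SDE}}(v)|$ is read off from the Schur-complement formula in lemma \ref{lemma_inverse_B_SIGMA} and bounded using $|\lambda_{\Sigma,\epsilon}|\geq \lambda_{\Sigma,-}>0$; the term $\|\mathcal{R}_{\mathrm{SDE}}^\perp(v)\|$ is bounded by lemma \ref{lemma_norm_inhomogeneity}; and $\|A_\Sigma^{-1}q_\Sigma\|$ is bounded by property \ref{lemma_property_matrix_A_Sigma}. Combining all contributions produces the claimed $C\epsilon$ estimate. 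The main subtle point, and therefore the main obstacle, is that the initial datum $v = B_\Sigma^{-1}b_\Sigma$ to which lemma \ref{lemma_my_2013_eq_2_8} is applied itself depends on $\epsilon$, so the right-hand side of \eqref{theorem_1} cannot be treated as an $\epsilon$-independent constant without care. The uniform control of $v$ in $\epsilon$, supplied by lemmas \ref{lemma_norm_inhomogeneity} and \ref{lemma_property_matrix_A_Sigma} together with the assumptions of section \ref{section_overview_assumptions}, is where the real work of the proof resides.
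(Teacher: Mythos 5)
Your proof is correct, and it shares the paper's opening move --- both begin from the identity
$\left[e^{\lambda_\Sigma t}-1\right]\frac{\sigma^2}{\lambda_\Sigma}-\mathcal R_{\mathrm{SDE}}\!\left(\left[e^{B_\Sigma t}-I\right]B_\Sigma^{-1}b_\Sigma\right)=\left(e^{\lambda_\Sigma t}\frac{\sigma^2}{\lambda_\Sigma}-\mathcal R_{\mathrm{SDE}}\!\left(e^{B_\Sigma t}B_\Sigma^{-1}b_\Sigma\right)\right)-\left(\frac{\sigma^2}{\lambda_\Sigma}-\mathcal R_{\mathrm{SDE}}\!\left(B_\Sigma^{-1}b_\Sigma\right)\right)$
and dispatch the steady-state difference with lemma \ref{easy_corrolary} --- but you treat the transient term by a genuinely different splitting. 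The paper replaces the slow component of the initial datum $B_\Sigma^{-1}b_\Sigma$ by $\sigma^2/\lambda_\Sigma$ (its auxiliary vector $q$), applies lemma \ref{lemma_my_2013_eq_2_8} to the pair $(q,\,\sigma^2/\lambda_\Sigma)$, and must then control the residual multiscale flow $\mathcal R_{\mathrm{SDE}}(e^{B_\Sigma t}v)$ with the stability estimate \eqref{equation_properties_homogenised_system} of lemma \ref{lemma_my_2013_eq_2_14}, exploiting that this residual has vanishing fast component and an $O(\epsilon)$ slow component. You instead keep the multiscale flow intact, apply lemma \ref{lemma_my_2013_eq_2_8} to the naturally matched pair $\bigl(B_\Sigma^{-1}b_\Sigma,\,\mathcal R_{\mathrm{SDE}}(B_\Sigma^{-1}b_\Sigma)\bigr)$, and absorb the $O(\epsilon)$ mismatch between the two reduced initial values into the elementary bound $\sup_{t\in[0,T]}e^{\lambda_\Sigma t}\leq\max\bigl(1,e^{\lambda_{\Sigma,+}T}\bigr)$ (do take the maximum with $1$, since nothing in section \ref{section_overview_assumptions} forces $\lambda_{\Sigma,+}\geq 0$). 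Your route is slightly more economical, as it never invokes \eqref{equation_properties_homogenised_system}, trading a multiscale stability lemma for a scalar exponential bound on a compact interval; the paper's route keeps the reduced-model initial value exactly equal to $\sigma^2/\lambda_\Sigma$ throughout, at the cost of the extra residual term. You also correctly locate where the real work lies: the uniform-in-$\epsilon$ control of the prefactor $|\mathcal R_{\mathrm{SDE}}(B_\Sigma^{-1}b_\Sigma)|+\|\mathcal R^{\perp}_{\mathrm{SDE}}(B_\Sigma^{-1}b_\Sigma)-A_\Sigma^{-1}q_\Sigma\,\mathcal R_{\mathrm{SDE}}(B_\Sigma^{-1}b_\Sigma)\|$ via lemmas \ref{lemma_inverse_B_SIGMA} and \ref{lemma_norm_inhomogeneity} and property \ref{lemma_property_matrix_A_Sigma}, which is exactly the bookkeeping the paper performs for its vector $q$.
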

\begin{proof}
Define $\kappa$ as the left-hand side in equation \eqref{equation_transient_model_error}.
Let the vector $q \in \mathbb{R}^3$ be equal to $B_{\Sigma} b_{\Sigma}^{-1}$ except for its first element, which we choose equal to $\sigma^2/\lambda_{\Sigma}$.
Let the vector $v \in \mathbb{R}^3$ be chosen such that $q + v = B_{\Sigma}^{-1} b_{\Sigma}$. 
Then it holds that, using the linearity of the restriction operator $\mathcal R_{\mathrm{SDE}}$,
\begin{equation}
\begin{aligned}
\kappa
&\leq 
\sup_{t \in [0,T]}
\left| e^{\lambda_{\Sigma} t} \frac{\sigma^2}{\lambda_{\Sigma}}
-
\mathcal R_{\mathrm{SDE}} \left(
e^{B_{\Sigma}t} B_{\Sigma}^{-1} b_{\Sigma}^{-1} 
\right)
\right|
+
\left| \frac{\sigma^2}{\lambda_{\Sigma}}
-
\mathcal R_{\mathrm{SDE}} \left(
B_{\Sigma}^{-1} b_{\Sigma}^{-1}  \right) 
\right| \\ 
&\leq 
\sup_{t \in [0,T]}
\left| e^{\lambda_{\Sigma} t} \frac{\sigma^2}{\lambda_{\Sigma}}
-
\mathcal R_{\mathrm{SDE}} \left(
e^{B_{\Sigma}t} q  \right)
\right| 
+ 
\sup_{t \in [0,T]}
\left| 
\mathcal R_{\mathrm{SDE}}  \left(
e^{B_{\Sigma}t} v  \right)
\right|
+
\left| \frac{\sigma^2}{\lambda_{\Sigma}}
-
\mathcal R_{\mathrm{SDE}} \left(
B_{\Sigma}^{-1} b_{\Sigma}^{-1} \right) \right|,
\end{aligned}
\label{expression_kappa}
\end{equation}
where the supremum is not necessary in the last term since it is time-independent.

The first term in equation \eqref{expression_kappa} can be interpreted as the difference between the solution of a multiscale linear ODE and its reduced model, and can be bounded using lemma \ref{lemma_my_2013_eq_2_8} (equation \eqref{theorem_1}):
there exists a $\epsilon_0 \in (0,1)$ and constant $C$, independent of $\epsilon$, such that for all $\epsilon < \epsilon_0$
\begin{equation}
\begin{aligned}
&\sup_{t \in [0,T]}
\left| e^{\lambda_{\Sigma} t} \frac{\sigma^2}{\lambda_{\Sigma}}
-
\mathcal R_{\mathrm{SDE}} e^{B_{\Sigma}t} \left( q \right)
\right| \\
&\leq  
C \epsilon 
\left( 
\frac{\sigma^2}{\lambda_{\Sigma}} 
+ 
\norm{\mathcal R^{\perp}_{\mathrm{SDE}}  \left(
q \right) - A_{\Sigma}(\epsilon)^{-1} q_{\Sigma} 
\frac{\sigma^2}{\lambda_{\Sigma}}}
\right) \\
&\leq 
C \epsilon
\left(
\frac{\sigma^2}{\lambda_{\Sigma}} 
+
\norm{R^{\perp}_{\mathrm{SDE}} \left( q \right) }
+ 
\norm{A_{\Sigma}(\epsilon)^{-1} q_{\Sigma} 
\frac{\sigma^2}{\lambda_{\Sigma}}}
\right)
\end{aligned}
\end{equation}
the norm of $\mathcal R^{\perp}_{\mathrm{SDE}} (q) = \mathcal R^{\perp}_{\mathrm{SDE}} \left( B_{\Sigma}^{-1}b_{\Sigma} \right)$, 
can be bounded using lemma \ref{lemma_norm_inhomogeneity}, 
and for the last term we have 
$\norm{A_{\Sigma}^{-1}(\epsilon) q_{\Sigma} \frac{\sigma^2}{\lambda_{\Sigma}}} 
\leq
\norm{A_{\Sigma}^{-1}(\epsilon)} 
\norm{q_{\Sigma}}
\norm{\frac{\sigma^2}{\lambda_{\Sigma}}}$ where $\norm{A_{\Sigma}^{-1}(\epsilon)}$ is bounded using equation \eqref{equation_norm_A_Sigma_inverse}.
In conclusion, there exists a constant $C$, independent of $\epsilon$, such that 
\begin{equation}
\sup_{t \in [0,T]}
\left| e^{\lambda_{\Sigma} t} \frac{\sigma^2}{\lambda_{\Sigma}}
-
\mathcal R_{\mathrm{SDE}} \left( e^{B_{\Sigma}t} q \right)
\right| 
\leq 
C \epsilon
\end{equation}

\sloppy
The second term in equation \eqref{expression_kappa} can be bounded using lemma \ref{lemma_my_2013_eq_2_14} and lemma \ref{easy_corrolary}. 
More specifically, from lemma \ref{lemma_my_2013_eq_2_14} (equation \eqref{equation_properties_homogenised_system}) we know that 
$\sup_{t \in [0,T]} \left| \mathcal R_{\mathrm{SDE}} 
\left(
e^{B_{\Sigma}t} v \right) \right| 
\leq 
C \left( |\mathcal R_{\mathrm{SDE}} (v)| + \epsilon 
\norm{\mathcal R^{\perp}_{\mathrm{SDE}} (v)} \right)$. 
We can now use the fact that $|\mathcal R^{\perp} (v)|=0$ and from lemma \ref{easy_corrolary} we know that 
$\norm{ \mathcal R_{\mathrm{SDE}} (v) } 
= 
\left| \frac{\sigma^2}{\lambda_{\Sigma}}  - 
\mathcal R_{\mathrm{SDE}} 
\left( B_{\Sigma}^{-1} b_{\Sigma} \right)
 \right| \leq C \epsilon$. 
Thus, there exists a constant $C$, independent from $\epsilon$, such that 
\begin{equation}
\sup_{t \in [0,T]} \left| \mathcal R_{\mathrm{SDE}} \left( e^{B_{\Sigma}t} v \right) \right| \leq C \epsilon
\end{equation}

The last term in equation \eqref{expression_kappa} is bounded through lemma \ref{easy_corrolary}.
The combination of these bounds leads to equation \eqref{equation_transient_model_error}.
\end{proof}

\bibliographystyle{spmpsci}
\bibliography{refs.bib}

\begin{thebibliography}{10}
\providecommand{\url}[1]{{#1}}
\providecommand{\urlprefix}{URL }
\expandafter\ifx\csname urlstyle\endcsname\relax
  \providecommand{\doi}[1]{DOI~\discretionary{}{}{}#1}\else
  \providecommand{\doi}{DOI~\discretionary{}{}{}\begingroup
  \urlstyle{rm}\Url}\fi

\bibitem{arnold_stochastic_1974}
Arnold, L.: Stochastic differential equations: theory and applications.
\newblock Wiley, New York (1974)

\bibitem{bernstein_matrix_2005}
Bernstein, D.S.: Matrix mathematics: theory, facts, and formulas with
  application to linear systems theory.
\newblock Princeton University Press, Princeton, N.J (2005)

\bibitem{bezanson_julia_2017}
Bezanson, J., Edelman, A., Karpinski, S., Shah, V.B.: Julia: {A} {Fresh}
  {Approach} to {Numerical} {Computing}.
\newblock SIAM Review \textbf{59}(1), 65--98 (2017)

\bibitem{bossuyt_convergence_2024}
Bossuyt, I., Vandewalle, S., Samaey, G.: Convergence of the {Micro}-{Macro}
  {Parareal} {Method} for a {Linear} {Scale}-{Separated} {Ornstein}-{Uhlenbeck}
  {SDE}.
\newblock In: Z.~Dostál, T.~Kozubek, A.~Klawonn, U.~Langer, L.F. Pavarino,
  J.~Šístek, O.B. Widlund (eds.) Domain {Decomposition} {Methods} in
  {Science} and {Engineering} {XXVII}, vol. 149, pp. 69--76. Springer Nature
  Switzerland, Cham (2024).
\newblock \doi{10.1007/978-3-031-50769-4_7}.
\newblock
  \urlprefix\url{https://link.springer.com/10.1007/978-3-031-50769-4_7}.
\newblock Series Title: Lecture Notes in Computational Science and Engineering

\bibitem{gardiner_handbook_1983}
Gardiner, C.W.: Handbook of stochastic methods for physics, chemistry, and the
  natural sciences.
\newblock No. v. 13 in Springer series in synergetics. Springer-Verlag, Berlin
  ; New York (1983)

\bibitem{legoll_parareal_2020}
Legoll, F., Leli\`evre, T., Myerscough, K., Samaey, G.: Parareal computation of
  stochastic differential equations with time-scale separation: a numerical
  convergence study.
\newblock Comput. Vis. Sci. \textbf{23}(1-4), Paper No. 9, 18 (2020)

\bibitem{Legoll2013}
Legoll, F., Leli\`evre, T., Samaey, G.: A micro-macro parareal algorithm:
  application to singularly perturbed ordinary differential equations.
\newblock SIAM J. Sci. Comput. \textbf{35}(4), A1951--A1986 (2013)

\bibitem{lions_resolution_2001}
Lions, J.L., Maday, Y., Turinici, G.: R\'{e}solution d'{EDP} par un sch\'{e}ma
  en temps ``parar\'{e}el''.
\newblock C. R. Acad. Sci. Paris S\'{e}r. I Math. \textbf{332}(7), 661--668
  (2001)

\bibitem{pavliotis_multiscale_2008}
Pavliotis, G.A., Stuart, A.M.: Multiscale methods: Averaging and
  homogenization, \emph{Texts in Applied Mathematics}, vol.~53.
\newblock Springer, New York (2008)

\bibitem{rackauckas_differentialequationsjl_2017}
Rackauckas, C., Nie, Q.: {DifferentialEquations}.jl – {A} {Performant} and
  {Feature}-{Rich} {Ecosystem} for {Solving} {Differential} {Equations} in
  {Julia}.
\newblock Journal of Open Research Software \textbf{5}(1), 15 (2017)

\bibitem{uhlenbeck_theory_1930}
Uhlenbeck, G.E., Ornstein, L.S.: On the {Theory} of the {Brownian} {Motion}.
\newblock Physical Review \textbf{36}(5), 823--841 (1930)

\end{thebibliography}

\end{document}